\title{\textbf{Higher Tannaka and Beyond}}
\author{Renaud Gauthier \footnote{rg.mathematics@gmail.com} \\ \\}
\theoremstyle{definition}
\newtheorem*{acknowledgments}{Acknowledgments}
\newtheorem{Wall}{Theorem}[section]
\newtheorem{InftyCat}{Definition}[section]
\newtheorem{modelCat}[InftyCat]{Definition}
\newtheorem{localization}[InftyCat]{Definition}
\newtheorem{Quillen}[InftyCat]{Definition}
\newtheorem{MonModCat}[InftyCat]{Definition}
\newtheorem{SimpModCat}[InftyCat]{Definition}
\newtheorem{WeaklySat}[InftyCat]{Definition}
\newtheorem{CombModCat}[InftyCat]{Definition}
\newtheorem{presentable}[InftyCat]{Definition}
\newtheorem{Rmkpres}[InftyCat]{Remark}
\newtheorem{SymmMonCat}[InftyCat]{Definition}
\newtheorem{Tspectrum}{Definition}[section]
\newtheorem{spectrum}[Tspectrum]{Definition}
\newtheorem{ComMonObj}[Tspectrum]{Definition}
\newtheorem{CRingSpec}[Tspectrum]{Definition}
\newtheorem{Rlinear}[Tspectrum]{Definition}
\newtheorem{stack}{Definition}[section]
\newtheorem{finite}[stack]{Definition}
\newtheorem{Rmrk1}[stack]{Remark}
\newtheorem{HopfRAlg}[stack]{Definition}
\newtheorem{finHopf}[stack]{Definition}
\newtheorem{RTensor}{Definition}[section]
\newtheorem{RTan}[RTensor]{Definition}
\newtheorem{pointed}[RTensor]{Definition}
\newtheorem{binding}{Definition}[subsection]
\newtheorem{Propbinding}[binding]{Proposition}
\newtheorem{Conj}{Conjecture}[subsection]
\newtheorem{Xcount}{Definition}[subsection]
\newtheorem{metric}[Xcount]{Definition}
\newtheorem{blowup}{Definition}[subsection]
\newtheorem{blowup2}[blowup]{Definition}
\newtheorem{crystal}[blowup]{Definition}
\newtheorem{tangentsheaf}[blowup]{Definition}
\newtheorem{Sinfty}[blowup]{Definition}
\newtheorem{SinftyConj}[blowup]{Conjecture}
\newtheorem{sudden}[blowup]{Proposition}
\newtheorem{FCatConn}[blowup]{Conjecture}
\newtheorem{GrothTop}{Proposition}[subsection]
\newtheorem{ZGrothn}[GrothTop]{Definition}
\newtheorem{ZGroth}[GrothTop]{Definition}
\newtheorem{PropGroth}[GrothTop]{Proposition}
\newcommand{\beq}{\begin{equation}}
\newcommand{\eeq}{\end{equation}}
\begin{document}
\maketitle
\begin{abstract}
We consider the origins of Higher Tannaka duality, as well as it consequences. In a first time we review the work of J. Wallbridge (\cite{W}) on that subject, which shows in particular that Hopf algebras are essential to generating Tannakian $\infty$-categories. While doing so we provide an analysis of what it means for Higher Tannaka to be a reconstruction program for stacks. Putting Wallbridge's work in perspective paves the way for causal models in Higher Category Theory. We introduce two interesting concepts that we deem to be instrumental within this framework; that of blow-ups of categories as well as that of fractal $\infty$-categories.
\end{abstract}
\newpage

\section{Introduction}
The origin of Tannaka duality can be found in \cite{C}, and from the perspective of a reconstruction program, the duality theorems of Tannaka and Krein were the first to show that some algebraic objects such as compact groups could be recovered from the collection of their representations. Tannaka (\cite{Ta}) showed that a compact group can be recovered from its category of representations. Krein (\cite{K}) worked on those categories that arise as the category of representations of such groups. A nice account of Tannaka duality can be found in \cite{JS}, with a formal development of the basis for a modern form of Tannaka duality given in \cite{DM}. It is worth giving the main result of that latter paper to give a flavor of the Tannaka formalism. Deligne and Milne considered a rigid abelian tensor category $(C, \otimes)$ for which $k=\text{End}(\textbf{1})$ and let $\omega: C \rightarrow \text{Vec}_k$ be an exact faithful $k$-linear tensor functor. Such a pair $(C, \omega)$ is otherwise known as a \textbf{neutralized Tannakian category}. They showed that Aut$^{\otimes}(\omega)$ is represented by an affine group scheme $G$ and that the functor $C \rightarrow \text{Rep}_k(G)$ given by the \textbf{fiber functor} $\omega$ is an equivalence of tensor categories. Papers of importance to us are given by \cite{M} which introduces Hopf algebras into the picture. One cannot omit \cite{SR} from the list of references, the first place where affine group schemes are introduced in the Tannaka formalism. A first step towards a higher categorical generalization of this formalism is made in \cite{T1} and is further developed in \cite{T3}. The use of ring spectra and affine group stacks is exemplified in \cite{W} which we use as a basis for studying the causality of Tannaka duality theorems. Extension of the Tannaka formalism to that of geometric stacks has been done by Lurie \cite{L3} as well as in the more thorough \cite{L4}. An interesting work in the continuation of \cite{W} is that of \cite{I}, where an application to motives is made.\\

We regard $(\infty,1)$-categories, heretofore referred to as $\infty$-categories, as building blocks of any natural phenomenon. In other terms we are ultimately interested in the phenomenology of higher Tannakian categories insofar as once one knows what they do, one can essentially determine what they are. We ask the simple question: given an $\infty$-Tannakian category and one of its representations, is there some operation generating such a representation. The answer to this question can be found in \cite{W}. There is a functor from the $\infty$-category of certain group stacks to a particular $\infty$-category of pointed $\infty$-categories. The group stacks $G=\text{Spec}(B)$ in question, $B$ being a certain kind of Hopf algebra, are thus seen as being essential to the Tannakian formalism. We reformulate this result as a reconstruction problem: if $\mathcal{X}$, $\mathcal{M}$ are stacks, $\pi: \mathcal{X} \rightarrow \mathcal{M}$ is a morphism of stacks, we seek to find stacks $\mathcal{U}^{(i)}$, and morphisms $\omega^{(i)}$, $i \geq 1$, such that in the following sequence:
\beq
\setlength{\unitlength}{0.5cm}
\begin{picture}(18,12)(0,0)
\put(0.2,10){$\mathcal{X}$}
\put(0.5,9.7){\vector(0,-1){1}}
\put(-0.3,9.2){$\pi$}
\put(-2,8){$\mathcal{\mathcal{M} \ni \pi \mathcal{X}}$}
\put(1.5,8.2){\vector(1,0){2.5}}
\put(4.5,8){$\mathcal{U}^{(1)}$}
\put(5,7.5){\vector(0,-1){2}}
\put(5.5,6.5){$\omega^{(1)} \in \mathcal{U}^{(3)}$}
\put(4.5,4.5){$\mathcal{U}^{(2)}$}
\put(8.5,6){\vector(0,-1){2}}
\put(8,3){$\mathcal{U}^{(4)}$}
\put(9,4.7){$\omega^{(2)}$}
\multiput(11,4)(1,-0.5){4}{\circle*{0.2}}
\put(15,2.8){$\mathcal{U}^{(p)}$}
\put(15.5,2.5){\vector(0,-1){1.8}}
\put(16,1.6){$\omega^{((p+1)/2)}$}
\put(15,-0.2){$\mathcal{X}$}
\end{picture}\\ \\ \nonumber
\eeq
\newline
we eventually recover $\mathcal{X}$ from the knowledge of $\pi \mathcal{X}$ only. In \cite{W} Higher Tannaka duality is exemplified with the very short sequence:
\beq
\setlength{\unitlength}{0.5cm}
\begin{picture}(12,7)(0,0)
\put(0.2,6){$G$}
\put(0.5,5.5){\vector(0,-1){1.5}}
\put(0.7,4.8){$\tilde{B}$}
\put(0,3){$\tilde{B}G$}
\put(1.5,3.2){\vector(1,0){1.5}}
\put(3.2,3){$\text{Mor}(\tilde{B}G, \text{Perf})$}
\put(5,2.5){\vector(0,-1){1.5}}
\put(5.2,1.8){$\omega$}
\put(3.2,0){$\text{Mor}( \ast, \text{Perf})$}
\put(6,2){\vector(1,0){2}}
\put(8.2,1.8){$\text{End}^{\otimes} \omega \in \text{TGp}^{\tau}(R)$}
\put(9,0.8){$\parallel$}
\put(8.9,-0.2){$G$}
\end{picture}\\ \\ \nonumber
\eeq
\newline
We then shift our attention to studying non-circular sequences of such morphisms and view them as representative of a causality in Algebraic Geometry. This motivates the introduction of categories with adjunctions between them and morphisms above them such as in:
\beq
\setlength{\unitlength}{0.5cm}
\begin{picture}(12,14)(0,0)
\put(-1,1){$\mathcal{C}_1$}
\put(0,1.5){\vector(1,0){5}}
\put(5,1.2){\vector(-1,0){5}}
\put(5.2,1){$\mathcal{C}_2$}
\put(2,5.5){$\mathcal{C}_0$}
\put(2,5){\vector(-2,-3){2}}
\put(3,5){\vector(2,-3){2}}
\put(9.3,7.3){$\mathcal{C}_{-1}$}
\thicklines
\put(9.5,7){\vector(-3,-4){3.7}}
\put(9,7.2){\line(-3,-2){4.5}}
\put(3,3.5){\vector(-3,-2){2.3}}
\put(9,7.3){\vector(-4,-1){5.6}}
\put(8.6,12.3){$\mathcal{C}_{-2}$}
\multiput(9,12)(0.09,-0.36){10}{\circle*{0.1}}
\put(9.9,8.4){\vector(1,-4){0.1}}
\multiput(8.5,11.7)(-0.09,-0.3){30}{\circle*{0.1}}
\put(5.8,2.8){\vector(-1,-4){0.1}}
\multiput(8,11.7)(-0.24,-0.3){30}{\circle*{0.1}}
\put(0.7,2.5){\vector(-1,-1){0.1}}
\multiput(8.2,12.2)(-0.3,-0.32){18}{\circle*{0.1}}
\put(2.8,6.4){\vector(-1,-1){0.1}}
\end{picture} \nonumber
\eeq
the collection of which forms a category that we denote by Cat$^{\triangleq}$ and which we conjecture to be connected. Towards proving such a result we introduce the concept of blow-up of a category, very much in the same spirit as the algebro-geometric definition:
\beq
\begin{CD}
\widetilde{\mathcal{C}_c}=\mathcal{C}-c \bigcup_{\pi}\text{E}(c) \\
@VV \pi V \\
\mathcal{C}
\end{CD}
\nonumber
\eeq
with E$(c)=\{ c' \xrightarrow{\psi} c \quad | \quad c' \in \text{Ob}(\mathcal{C}), \text{ $\psi$ not a composition}\}$. \\

Another model we introduce is that of fractal $\infty$-categories with non-well founded set theory as a foundation. Those are $\mathbb{Z}$-graded category-like structures $\mathcal{C}^{(*)}$ such that for all $n$, $\mathcal{C}^{(n)}$ has $\mathcal{C}^{(n-1)}$ as an object as well as all $n$-1-st categories $\mathcal{C}^{(n-1)}$ has a morphism with. "Higher" morphisms are functors between "lower" morphisms which gives credence to the term $\infty$-category. The fractal-like presentation of such $\mathbb{Z}$-graded categories leads to the notion of fractal $\infty$-categories, the collection of which we also conjecture to be connected. Disclaimer: there is an obvious bias in the references as we have mentioned those that we deemed to be of greater importance. It does not mean that those we didn't mention are any less important.

\begin{acknowledgments}
The author would like to thanks D. Yetter for sharing its knowledge of Category Theory and related topics, and for enriching discussions. Thanks also go to J. Wallbridge for explicating parts of his paper as well as to J. Bergner for giving very helpful comments on a preliminary draft of this paper.
\end{acknowledgments}

\section{Wallbridge's Higher Tannaka Results with Generalizations}
If we consider higher Tannakian categories as being essential components of any viable model of nature, is there some mathematical object that produces such categories? The answer to that question is in the affirmative and can be recovered from \cite{W}. We provide the raw statement below. All notations and background material can be found in the appendix.
\begin{Wall}(\cite{W})
Let $\tau$ be a subcanonical topology, TGp$^{\tau}(R)$ the $(\infty,1)$-category of $R$-Tannakian group stacks, (Tens$^{\text{rig}}_R)_*$ the $(\infty,1)$-category of pointed rigid $R$-tensor $(\infty,1)$-categories. One has an adjunction:
\beq
\text{Fib}_*: (\text{Tens}^{\text{rig}}_R)_* \rightleftarrows \text{Gp}^{\tau}(R)^{op}:\text{Perf}_*
\eeq
Perf$_*$ is fully faithful. If $R$ is an $E_{\infty}$-ring, then $(T,\omega)$ is a pointed finite $R$-Tannakian $\infty$-category if and only if $(T,\omega)=$Perf$_*(G)$ for $G$ a finite $R$-Tannakian group stack.
\end{Wall}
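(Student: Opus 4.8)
The plan is to establish the adjunction first, then read off the two corollaries (full faithfulness of $\text{Perf}_*$ and the characterization of pointed finite $R$-Tannakian $\infty$-categories) from the unit/counit analysis. First I would construct the two functors explicitly. Given a pointed rigid $R$-tensor $(\infty,1)$-category $(T,\omega)$, the functor $\text{Fib}_*$ should send it to (the opposite of) the group stack of tensor-automorphisms of the fiber functor $\omega$, i.e. $\text{Fib}_*(T,\omega) = \underline{\text{Aut}}^{\otimes}(\omega)$, with its natural structure as an affine group stack over $R$; one checks using the rigidity hypothesis on $T$ and standard cosimplicial-resolution arguments that this is represented by $\text{Spec}(B)$ for $B$ a Hopf algebra object in $R$-modules, so that it indeed lands in $\text{Gp}^{\tau}(R)^{op}$. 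Conversely $\text{Perf}_*(G)$ is the $(\infty,1)$-category $\text{Mor}(BG,\text{Perf})$ of perfect complexes on the classifying stack $BG$, pointed by the pullback along $\ast \to BG$, i.e. the forgetful functor; that this is a pointed rigid $R$-tensor $(\infty,1)$-category follows because $\text{Perf}$ is symmetric monoidal with all objects dualizable and limits of such categories inherit the property.

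Next I would exhibit the adjunction unit and counit. The counit $\text{Fib}_* \circ \text{Perf}_*(G) \to G$ (in $\text{Gp}^{\tau}(R)^{op}$, hence a map $G \to \text{Fib}_*\text{Perf}_*(G)$ of group stacks) is the canonical comparison sending a point of $G$ to the automorphism of the forgetful fiber functor it induces by acting on each representation; the heart of the matter is to show this is an equivalence, which is precisely a higher-categorical Tannaka reconstruction statement and should be proved by descent along $\ast \to BG$ together with the comonadicity of pullback, identifying $\text{Perf}(BG)$ with comodules over $B = \mathcal{O}(G)$ and then recovering $G$ from that comodule category. Full faithfulness of $\text{Perf}_*$ is then equivalent to this counit being an equivalence, since for a right adjoint full faithfulness of the right adjoint is equivalent to the counit being an equivalence; so once the reconstruction equivalence $G \simeq \text{Fib}_*\text{Perf}_*(G)$ is in hand, the first assertion follows formally.

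For the final biconditional, assume $R$ is $E_\infty$. The direction "$(T,\omega) = \text{Perf}_*(G)$ for $G$ finite $\Rightarrow$ $(T,\omega)$ pointed finite $R$-Tannakian" is essentially the content of verifying that $\text{Perf}_*(G)$ satisfies the defining axioms of a pointed finite $R$-Tannakian $\infty$-category (rigidity, the pointing being a fiber functor, and the finiteness condition matching the finiteness of $G=\text{Spec}(B)$, i.e.\ $B$ a finite Hopf algebra); this uses that $\text{Perf}$ behaves well over an $E_\infty$-base and that finiteness of the Hopf algebra transfers to compact generation / dualizability constraints on $\text{Perf}(BG)$. The converse direction is where the real work lies: given $(T,\omega)$ pointed finite $R$-Tannakian, set $G := \text{Fib}_*(T,\omega) = \text{Spec}(B)$ with $B$ the Hopf algebra extracted from $\omega$, check $G$ is a \emph{finite} $R$-Tannakian group stack using the finiteness hypothesis on $T$, and then show the unit map $(T,\omega) \to \text{Perf}_*\text{Fib}_*(T,\omega) = \text{Perf}_*(G)$ is an equivalence of pointed tensor $(\infty,1)$-categories. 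This last step is the main obstacle: it amounts to proving that every finite $R$-Tannakian $\infty$-category is equivalent to perfect complexes on the classifying stack of its own Tannaka group, which requires a comonadicity/Barr–Beck argument in the $(\infty,1)$-categorical setting — one must verify the fiber functor $\omega$ is comonadic (conservativity plus preservation of the relevant limits, where rigidity and the pointedness are used) and then identify the comonad with $-\otimes B$, so that $T \simeq \text{coMod}_B \simeq \text{Perf}(BG)$. I expect the delicate points to be the conservativity of $\omega$ and the control of the comonad's underlying endofunctor, both of which lean essentially on the finiteness hypothesis and on rigidity; the rest of the argument is then formal manipulation of adjunctions.
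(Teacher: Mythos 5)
There is an important mismatch of expectations here: the paper does not prove this statement. It is quoted verbatim from Wallbridge (Theorem 7.14 and surrounding results of \cite{W}), and the present article only records the statement, supplies the definitions in its appendix, and, in the Analysis subsection, points at which ingredients of Wallbridge's argument it considers essential. So there is no in-paper proof to measure your proposal against; I can only compare your outline with the ingredients the paper attributes to \cite{W}.

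Measured against those, your outline is the standard Tannakian strategy and is broadly consistent, but it glosses over exactly the points the paper singles out as the nontrivial content. First, you define $\text{Fib}_*$ as $\underline{\text{Aut}}^{\otimes}(\omega)$, whereas Wallbridge's functor is $\text{End}^{\otimes}(\omega)$; the identification $\text{End}^{\otimes}\omega \simeq \mathbb{R}\text{Aut}^{\otimes}\omega$ is itself a theorem (Proposition 6.4 of \cite{W}, going back to Saavedra) resting on rigidity, so starting from automorphisms quietly absorbs a step you would still have to prove. Second, the representability of $\text{End}^{\otimes}(\omega)$ as an affine group stack for an \emph{arbitrary} subcanonical topology --- which is what makes $\text{Fib}_*$ land in $\text{Gp}^{\tau}(R)^{op}$ at all --- is obtained in \cite{W} not by ``standard cosimplicial-resolution arguments'' but by presenting symmetric monoidal $\infty$-categories as homotopy colimits of free ones on $\infty$-graphs and reducing to $\text{End}^{\otimes}\omega \simeq \text{End}(M) \simeq \text{Spec}(\text{Fr}(M \otimes_R M^{\vee}))$ for a rigid module $M$; this is the step the paper calls pivotal, and your sketch does not engage with it. Third, in the final biconditional you propose to ``verify the fiber functor $\omega$ is comonadic (conservativity plus preservation of the relevant limits)'' from finiteness and rigidity; but in this framework conservativity and limit-preservation of the induced functor $\text{Mod}(\tilde{\text{B}}\text{End}^{\otimes}(\omega)) \rightarrow \text{Mod}_R$ are the \emph{definition} of a finite fiber functor, so in one direction they are hypotheses rather than things to verify, and in the other direction (showing $\text{Perf}_*(G)$ is finite Tannakian for $G$ finite) they are the substantive check your sketch leaves implicit. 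Your Barr--Beck route for the unit map is plausible --- it is how Lurie's versions \cite{L3}, \cite{L4} proceed --- but it is asserted rather than carried out. As a roadmap the proposal is reasonable; as a proof it reduces the theorem to the main lemmas of \cite{W} without proving them.
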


\subsection{Fleshing out the details: Tannaka and beyond}
The ingredients of Wallbridge's higher Tannaka results are best understood from the perspective of Brave New Algebraic Geometry (BNAG for short), a specialization to stable homotopy theory of Homotopical Algebraic Geometry (HAG for short) (\cite{T2}, \cite{TV2}, \cite{TV3}, \cite{TV5}), a mathematical setting where affine objects are modelled by homotopy-ring like objects. In particular, in relative algebraic geometry one can do algebraic geometry over well-behaved symmetric monoidal base categories, and this program has been initiated in \cite{TV1} for symmetric monoidal $\infty$-categories. Thus working with a site such as $(\text{Aff}_R, \tau)$ comes naturally to mind if one is interested in working in BNAG. It is in \cite{T1} and \cite{T2} that we see some work done on $\text{Aff}_M:=(\text{Comm}(M))^{op}$, $M$ being a symmetric monoidal model category, the base category of some HAG-like theory. If one takes $M$ to be the category of spectra, one does BNAG. Thus working with E$_{\infty}$-rings is well-motivated, and in particular using $(\text{Aff}_R, \tau)$, $R$ an E$_{\infty}$-ring,  as a site is well-motivated as well. In \cite{T1} Toen considered Tannaka duality within the context of BNAG. Algebraically, one needs to consider algebraic group schemes. Hence the use of affine group stacks in Wallbridge's framework.\\

We look at Tannaka duality as a recovery problem (read reconstruction) that may be stated in broad terms as follows. We consider the very general problem of having a stack $\mathcal{X}$, a moduli stack $\mathcal{M}$, a morphism $\mathcal{X} \xrightarrow{\pi} \mathcal{M}$, and of having to recover $\mathcal{X}$ from the knowledge of $\pi \mathcal{X}$. In order to achieve this we adopt the point of view that it is necessary not to isolate $\pi \mathcal{X}$ but to compare it to other appropriately chosen stacks. If $\mathcal{U}$ is one such stack, a comparison is conveniently provided by the morphism space Mor($\pi \mathcal{X}, \mathcal{U})$. Most likely this will not be sufficient to determine $\mathcal{X}$ in full. Further comparisons may be required. Under certain conditions, Mor($\pi \mathcal{X}, \mathcal{U})$ itself is a stack that we can compare to another stack of our chosing. We let $\mathcal{U}:=\mathcal{U}^{(1)}$, Mor($\pi \mathcal{X}, \mathcal{U}):=\mathcal{U}^{(2)}$. If we have chosen another stack for comparison purposes, we denote it by $\mathcal{U}^{(3)}$. The comparison is then provided by considering Mor($ \mathcal{U}^{(2)}, \mathcal{U}^{(3)})$. We do this as many times as necessary in the hope that for some $p$ finite we end up having $\mathcal{X}$ as the image of some morphism $\omega \in \text{Mor}(\mathcal{U}^{(p-1)}, \mathcal{U}^{(p)})$. This motivates the following definition:
\begin{binding}
Let $\mathcal{X}$ be a stack, $\mathcal{M}$ a moduli stack, $\pi: \mathcal{X} \rightarrow \mathcal{M}$ a morphism of stacks. A \textbf{volume of stacks for} $\mathcal{X}$ is a sequence of stacks $\mathcal{U}^{(i)}$, $i \geq 1$ defined inductively by setting Mor($\mathcal{U}^{(n-1)}, \mathcal{U}^{(n)})=:\mathcal{U}^{(n+1)}$ with $\mathcal{U}^{(1)}$ being given and Mor($\pi \mathcal{X}, \mathcal{U}^{(1)})=:\mathcal{U}^{(2)}$. A given volume is called a \textbf{bound volume for} $\mathcal{X}$ if we can find a sequence of morphisms $\omega^{(n)} \in \text{Mor}(\mathcal{U}^{(2n)}, \mathcal{U}^{(2n+1)})$ such that for some $p$ finite the image of $\omega^{(p)} \circ \cdots \omega^{(1)}$ in $\mathcal{U}^{(2n+1)}$ is precisely $\mathcal{X}$. The \textbf{binding} is then given by the sequence of stacks $\mathcal{U}^{(i)}$ and morphisms $\omega^{(i)}$. Failure to recover $\mathcal{X}$ from a volume gives us a \textbf{loose volume}. We graphically represent a bound volume as follows:
\beq
\setlength{\unitlength}{0.5cm}
\begin{picture}(18,12)(4,0)
\put(0.2,10){$\mathcal{X}$}
\put(0.5,9.7){\vector(0,-1){1}}
\put(-0.3,9.2){$\pi$}
\put(0,8){$\mathcal{M}$}
\put(1.2,8.2){\vector(1,0){1}}
\put(2.5,8){$\text{Mor}(\pi \mathcal{X}, \mathcal{U}^{(1)}):=\mathcal{U}^{(2)}$}
\put(5,7.5){\vector(0,-1){2}}
\put(5.5,6.5){$\omega^{(1)} \in \text{Mor}(\mathcal{U}^{(2)}, \mathcal{U}^{(3)}):=\mathcal{U}^{(4)}$}
\put(4.5,4.5){$\mathcal{U}^{(3)}$}
\put(9,6){\vector(0,-1){2}}
\put(8.5,3){$\mathcal{U}^{(5)}$}
\put(9.5,4.7){$\omega^{(2)} \in \text{Mor}(\mathcal{U}^{(4)}, \mathcal{U}^{(5)}):=\mathcal{U}^{(6)}$}
\multiput(12,3)(1,-0.5){4}{\circle*{0.2}}
\put(15,2.8){$\text{Mor}(\mathcal{U}^{(2n-2)}, \mathcal{U}^{(2n-1)}):=\mathcal{U}^{(2n)}$}
\put(17,2.5){\vector(0,-1){2}}
\put(17.2,1.6){$\omega^{(n)}$}
\put(15,-0.2){$\mathcal{X} \in \mathcal{U}^{(2n+1)}$}
\end{picture}\\ \\
\eeq \nonumber \\
\end{binding}
What \cite{W} gives is a bound volume for Tannakian group stacks as we will see. We see the Tannaka reconstruction as essentially recovering finite-$R$-Tannakian group stacks $G$ from their classifying stack $\tilde{B}G$. It is true that $\tilde{B}$ has an adjoint $\tilde{\Omega}:\text{St}^{\tau}(R) \rightarrow \text{Gp}^{\tau}(R)$, that sends a stack $F$ to $\tilde{\Omega}(F):x \mapsto \omega(F(x))$ where for an $(\infty,0)$-category $X$, $\omega(X): [n] \mapsto X^{\Delta^n_{\ast}}$. What Wallbridge does is much more; he finds an inverse to $\tilde{B}$ by comparing $\tilde{B}G$ to Perf, and then comparing Mor($\tilde{B}G, \text{Perf})$ to Mor($\ast, \text{Perf})$. In doing so Wallbridge does not see the morphisms spaces as stacks but as $R$-tensor categories instead. The last comparison is induced from the natural morphism $\ast \rightarrow \tilde{B}G$ and is denoted $\omega$. He proves End$^{\otimes}\omega = G$.
\begin{Propbinding}
For $\mathcal{X}$ a finite-$R$-Tannakian group stack, $\pi=\tilde{B}$, $\mathcal{M}=\tilde{B}G$, one has a bound volume for $G$ being provided by choosing $\mathcal{U}^{(1)}=\text{Perf}$, $\mathcal{U}^{(3)}=\text{Mor}(\ast, \text{Perf})$, $\omega^{(1)}=\omega$ as defined above and $\omega^{(2)}$ is the morphism that sends any morphism $\psi$ to End$^{\otimes} \psi$.
\end{Propbinding}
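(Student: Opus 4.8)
The plan is to argue by direct verification: I would unwind Wallbridge's construction as recalled just above and check, one piece at a time, that the data listed instantiate the notions of \emph{volume} and \emph{bound volume}. The only substantial input is the Theorem recalled above, which I take for granted; everything else is bookkeeping, so I will indicate the identifications to be made rather than carry them out.

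First I would pin down the stacks. Take $\mathcal{X}=G$ a finite $R$-Tannakian group stack and $\pi=\tilde B$, so that $\pi\mathcal{X}=\tilde B G$; the choice $\mathcal{M}=\tilde B G$ then merely records that ``$\pi$'' sends $G$ to $\tilde B G$. The choice $\mathcal{U}^{(1)}=\text{Perf}$ is admissible because $\text{Perf}$ is the moduli stack of perfect complexes, so that
\[
\mathcal{U}^{(2)}:=\text{Mor}(\pi\mathcal{X},\mathcal{U}^{(1)})=\text{Mor}(\tilde B G,\text{Perf})\simeq\text{Perf}(\tilde B G)=\text{Rep}_R(G),
\]
a pointed rigid $R$-tensor $(\infty,1)$-category whose basepoint is the fiber functor $\omega$ obtained by pulling back along $\ast\to\tilde B G$; equivalently $\mathcal{U}^{(2)}=\text{Perf}_*(G)$. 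Likewise $\mathcal{U}^{(3)}=\text{Mor}(\ast,\text{Perf})\simeq\text{Perf}(\ast)=\text{Perf}_R=\text{Perf}_*(\ast)$ is an admissible comparison object. One checks that $\mathcal{U}^{(1)}$, $\mathcal{U}^{(2)}$, $\mathcal{U}^{(3)}$ are objects of $(\text{Tens}^{\text{rig}}_R)_*$, with the understanding that the instances of $\text{Mor}$ in the volume are taken in that $(\infty,1)$-category.

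Next I would check the morphisms and conclude. With $\mathcal{U}^{(4)}:=\text{Mor}(\mathcal{U}^{(2)},\mathcal{U}^{(3)})$, the functor $\omega$ is basepoint-preserving by construction (the target $\text{Perf}_R$ carrying its identity pointing), whence $\omega^{(1)}:=\omega$ lies in $\text{Mor}(\mathcal{U}^{(2)},\mathcal{U}^{(3)})=\mathcal{U}^{(4)}$, as demanded of $\omega^{(1)}$. Put $\mathcal{U}^{(5)}:=\text{Gp}^\tau(R)^{op}$; the rule $\psi\mapsto\text{End}^{\otimes}\psi$ is the construction underlying the functor $\text{Fib}_*$ of the Theorem and furnishes a morphism $\omega^{(2)}:\mathcal{U}^{(4)}\to\mathcal{U}^{(5)}$. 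Since $\omega$ is precisely the basepoint of $\mathcal{U}^{(2)}=\text{Perf}_*(G)$, one gets $\omega^{(2)}(\omega^{(1)})=\text{End}^{\otimes}\omega=\text{Fib}_*(\text{Perf}_*(G))$, and, because $\text{Perf}_*$ is fully faithful, the counit of the adjunction $\text{Fib}_*\dashv\text{Perf}_*$ is an equivalence, so $\text{Fib}_*(\text{Perf}_*(G))\simeq G=\mathcal{X}$. (The hypotheses that $R$ is an $E_\infty$-ring and that $G$ is finite $R$-Tannakian guarantee that $\text{Perf}_*(G)$ is a pointed finite $R$-Tannakian $\infty$-category, consistently with the last clause of the Theorem; they are not needed for the counit to be invertible.) Reading ``$\omega^{(2)}\circ\omega^{(1)}$'' as $\omega^{(2)}$ evaluated at $\omega^{(1)}\in\mathcal{U}^{(4)}$, its image in $\mathcal{U}^{(5)}$ is thus $\mathcal{X}$; hence, with $p=2$, the volume determined by the stated choices is a bound volume for $G$, and this bound volume is exactly Wallbridge's reconstruction of $G$ from $\tilde B G$.

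The routine parts are the identifications $\text{Mor}(-,\text{Perf})\simeq\text{Perf}(-)$ and the functoriality of $\text{End}^{\otimes}(-)$. The step that needs care --- and the main obstacle --- is reconciling the two ambient settings: the definition of a \emph{volume of stacks} is phrased in terms of stacks and their morphism stacks, whereas Wallbridge treats $\text{Mor}(\tilde B G,\text{Perf})$ and $\text{Mor}(\mathcal{U}^{(2)},\mathcal{U}^{(3)})$ as $R$-tensor $(\infty,1)$-categories. One must therefore either take all the occurrences of $\text{Mor}$ inside a single ambient $(\infty,1)$-category large enough to contain $\text{St}^\tau(R)$ and $(\text{Tens}^{\text{rig}}_R)_*$ at once --- using that $\text{Perf}$ is a stack, so that $\text{Perf}(\tilde B G)$ is computed by a mapping stack --- or else read ``volume of stacks'' loosely enough to admit $\infty$-categorical comparison objects. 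A secondary subtlety, already noted, is that ``$\omega^{(2)}\circ\omega^{(1)}$'' is here an evaluation and not a composition of arrows with matching source and target. Once these points are settled, the only genuinely nonformal ingredient is the equivalence $\text{End}^{\otimes}\omega\simeq G$ supplied by the Theorem.
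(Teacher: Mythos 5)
Your proposal is correct and takes essentially the same route as the paper: the paper's proof consists of the single remark that the statement is a rewriting of Wallbridge's Theorem 7.14 together with the diagram $G \mapsto \tilde{B}G \mapsto \text{Mor}(\tilde{B}G,\text{Perf}) \mapsto \text{Mor}(\ast,\text{Perf}) \mapsto \text{End}^{\otimes}\omega = G$, which is exactly the identification you carry out in detail. Your extra care about the stack-versus-tensor-category mismatch and about reading $\omega^{(2)}\circ\omega^{(1)}$ as an evaluation matches caveats the paper itself makes in the surrounding discussion rather than in the proof.
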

\begin{proof}
This is a simple rewriting of Theorem 7.14 of \cite{W}. Graphically:
\beq
\setlength{\unitlength}{0.5cm}
\begin{picture}(12,7)(0,0)
\put(0.2,6){$G$}
\put(0.5,5.5){\vector(0,-1){1.5}}
\put(0.7,4.8){$\tilde{B}$}
\put(0,3){$\tilde{B}G$}
\put(1.5,3.2){\vector(1,0){1.5}}
\put(3.2,3){$\text{Mor}(\tilde{B}G, \text{Perf})$}
\put(5,2.5){\vector(0,-1){1.5}}
\put(5.2,1.8){$\omega$}
\put(3.2,0){$\text{Mor}( \ast, \text{Perf})$}
\put(6,2){\vector(1,0){2}}
\put(8.2,1.8){$\text{End}^{\otimes} \omega \in \text{TGp}^{\tau}(R)$}
\put(9,0.8){$\parallel$}
\put(8.9,-0.2){$G$}
\end{picture}  \nonumber\\ \\
\eeq
\end{proof}

\subsection{Analysis}
Higher Tannaka as exposed by Wallbridge is a surprisingly fast way to recover a finite-$R$-Tannakian group stack $G$ from $\tilde{B}G$. This is owing to the fact that End$^{\otimes}\omega$ is a representable Gp$(\mathcal{S})$-valued prestack, and thus an affine group stack with respect to \textit{any} subcanonical topology (Proposition 6.10 of \cite{W}). That justifies that one can close our bound volume in two steps only. The proof of such a statement hinges on two nontrivial facts. The first is Lemma 6.9 whose proof involves arguing that symmetric monoidal $\infty$-categories are given by the homotopy colimit of free symmetric monoidal $\infty$-categories over $\infty$-graphs (\cite{W}), conveniently defined by a universal property. This introduces maps into arbitrary symmetric monoidal $\infty$-categories in the picture. If one picks Mod$^{\text{rig}}_R$ to be one such category, one ends up having an equivalence End$^{\otimes}\omega \simeq \text{End}(M)$ for $M$ a rigid module, and being dualizable, this latter object can be written Spec(Fr($M \otimes_R M^{\vee}))$ where Fr: Mod$_R \rightarrow \text{CAlg}_R$ is the free functor, left adjoint to the forgetful functor CAlg$_R \rightarrow  \text{Mod}_R$. The other fact that is needed and is not obvious is that End$^{\otimes}\omega \simeq \mathbb{R}\text{Aut}^{\otimes}\omega$, and this is true by virtue of Proposition 6.4 of \cite{W}(originally from \cite{Sa}).\\

Proposition 4.6 of \cite{W} is a pivotal point where we move from discussing stacks $\tilde{B}G$ and Perf to discussing symmetric monoidal $\infty$-categories, a setting for which the $\infty$-graph discussion takes place. This proposition states that for the site $(\text{Aff}_R, \tau)$ and for Cat$_{\infty}$, a presentable symmetric monoidal $\infty$-category, then St$_{\text{Cat}_{\infty}}(R)$ is tensored and enriched over Cat$_{\infty}$. That this latter category is presentable is owing to the fact that it is the localization of a combinatorial simplicial model category, $P(\Sigma)$, a fact reminded to us by Wallbridge. From that point on one can use the adjunction $\mathcal{S} \rightleftarrows \text{Cat}_{\infty}$ to see $\tilde{B}G$ as an element not of St$^{\tau}(R)$ but of St$^{\tau}_{\text{Cat}_{\infty}}(R)$, from which it follows that one can invoke the fact that Mor$(\tilde{B}G, \text{Perf})$ really is an element of Cat$_{\infty}$. From there it soon follows that it is also an element of Tens$^{\text{rig}}_R$ and Wallbridge's program comes into effect.\\

If we come back to our general question of recovering a stack $\mathcal{X}$ from the knowledge of $\pi \mathcal{X}$, suppose we restrict ourselves to working in Homotopical Algebraic Geometry. Suppose further we are interested in Stable Homotopy Theory. Then it is natural to work within the confines of Brave New Algebraic Geometry. Thus we can consider $R$ to be an E$_{\infty}$-ring and we can work on a site $(\text{Aff}_R, \tau)$ as well, over which we construct stacks. $\mathcal{X}$ and $\mathcal{M}$ will be stacks over such a site. Suppose we are given $\pi \mathcal{X} \in \mathcal{M}$. We initiate a volume for $\mathcal{X}$. A simple first stack to compare $\pi \mathcal{X}$ to is Perf. We consider Mor($\pi \mathcal{X}, \text{Perf})$. If further $\mathcal{M} \in \text{St}^{\tau}(R)$, then by Proposition 4.6 of \cite{W}, Mor($\pi \mathcal{X}, \text{Perf})$ can be viewed as an element of Cat$_{\infty}$, and following the argument of Wallbridge we find that it is an element of Tens$^{\text{rig}}_R$. $\omega: \text{Mor}(\pi \mathcal{X}, \text{Perf}) \rightarrow \text{Mor}(\ast, \text{Perf})$ we regard as a comparison that comes naturally to mind even if we do not see it as being induced by the morphism $\ast \rightarrow \pi \mathcal{X}$. From our persective it amounts to comparing morphisms $\pi \mathcal{X} \rightarrow \text{Perf}$ with the very elementary morphisms $\ast \rightarrow \text{Perf}$. We impose that $\omega$ be a finite-fiber functor. Then the pair $(\text{Mor}(\pi \mathcal{X}, \text{Perf}), \omega)$ is a pointed finite $R$-Tannakian $\infty$-category and Theorem 7.14 of \cite{W} tells us we must have Mor($\pi \mathcal{X}, \text{Perf})=\text{Mor}(\tilde{B}G, \text{Perf})$ for $G$ a finite $R$-Tannakian group stack. Thus what we obtain at best is a local equivalence of $\pi \mathcal{X}$ with a classifying stack $\tilde{B}G$ of a Tannakian group stack $G$. Further End$^{\otimes}\omega=G$, at the second stage in the volume for $\mathcal{X}$. It means an additional two comparisons such as above yields back $G$ again and thus provides a bound volume for $G$. \\

In the next section we go beyond Tannaka as this duality yields bound volumes for Tannakian group stacks. What we are really interested in is loose volumes. If we want to determine the origin of any given viable model of nature one does not want to deal with bound volumes insofar as those do not provide an origin but simply display a circularity in ideas that though interesting from a duality perspective is not very illuminating for the purpose of answering fundamental questions about causality.

\section{Causality and Dynamics in Higher Categories}
\subsection{Conic towers of categories}
Let Cat denote the category of all small categories. From Cat we construct another category Cat$^{\triangleq}$ defined as follows. For objects we take small categories that have an adjunction with at least one other small category, and for which we can find a third category with a morphism to each of those two categories. If $\mathcal{C}_1$ and $\mathcal{C}_2$ are two such categories between which there is an adjunction, $\mathcal{C}$ is a category with a morphism to both categories, then both $\mathcal{C}_1$ and $\mathcal{C}_2$ are viewed as objects of Cat$^{\triangleq}$ (though $\mathcal{C}$ may not be so as it may not necessarily have an adjunction with another category). We first introduce morphisms in Cat$^{\triangleq}$ informally before defining them in full. The triangle depicted below is defined to be a morphism in Cat$^{\triangleq}$ between $\mathcal{C}_1$ and $\mathcal{C}_2$.

\beq
\setlength{\unitlength}{0.5cm}
\begin{picture}(7,6)(0,0)
\put(-1,1){$\mathcal{C}_1$}
\put(0,1.5){\vector(1,0){5}}
\put(5,1.2){\vector(-1,0){5}}
\put(5.2,1){$\mathcal{C}_2$}
\put(2,5.5){$\mathcal{C}$}
\put(2,5){\vector(-2,-3){2}}
\put(3,5){\vector(2,-3){2}}
\end{picture} \label{simplecone}
\eeq
This is the simplest morphism we can have. From such a morphism we can have another one if, writing $\mathcal{C}_0$ for $\mathcal{C}$ we can find a fourth category $\mathcal{C}_{-1}$ with a morphism to $\mathcal{C}_0$, and by composition it would also have a morphism to $\mathcal{C}_1$ and $\mathcal{C}_2$ as well. We depict this as follows:

\beq
\setlength{\unitlength}{0.5cm}
\begin{picture}(12,8)(0,0)
\put(-1,1){$\mathcal{C}_1$}
\put(0,1.5){\vector(1,0){5}}
\put(5,1.2){\vector(-1,0){5}}
\put(5.2,1){$\mathcal{C}_2$}
\put(2,5.5){$\mathcal{C}_0$}
\put(2,5){\vector(-2,-3){2}}
\put(3,5){\vector(2,-3){2}}
\put(9.3,7.3){$\mathcal{C}_{-1}$}
\thicklines
\put(9.5,7){\vector(-3,-4){3.7}}
\put(9,7.2){\line(-3,-2){4.5}}
\put(3,3.5){\vector(-3,-2){2.3}}
\put(9,7.3){\vector(-4,-1){5.6}} \label{oneMor}
\end{picture}
\eeq

If we can find a fifth category $\mathcal{C}_{-2}$ with a morphism to $\mathcal{C}_{-1}$ then by composition $\mathcal{C}_{-2}$ also has a morphism to $\mathcal{C}_i$, $i=1,2,0$. This gives yet another morphism between $\mathcal{C}_1$ and $\mathcal{C}_2$ which we depict as follows:
\beq
\setlength{\unitlength}{0.5cm}
\begin{picture}(12,14)(0,0)
\put(-1,1){$\mathcal{C}_1$}
\put(0,1.5){\vector(1,0){5}}
\put(5,1.2){\vector(-1,0){5}}
\put(5.2,1){$\mathcal{C}_2$}
\put(2,5.5){$\mathcal{C}_0$}
\put(2,5){\vector(-2,-3){2}}
\put(3,5){\vector(2,-3){2}}
\put(9.3,7.3){$\mathcal{C}_{-1}$}
\thicklines
\put(9.5,7){\vector(-3,-4){3.7}}
\put(9,7.2){\line(-3,-2){4.5}}
\put(3,3.5){\vector(-3,-2){2.3}}
\put(9,7.3){\vector(-4,-1){5.6}}
\put(8.6,12.3){$\mathcal{C}_{-2}$}
\multiput(9,12)(0.09,-0.36){10}{\circle*{0.1}}
\put(9.9,8.4){\vector(1,-4){0.1}}
\multiput(8.5,11.7)(-0.09,-0.3){30}{\circle*{0.1}}
\put(5.8,2.8){\vector(-1,-4){0.1}}
\multiput(8,11.7)(-0.24,-0.3){30}{\circle*{0.1}}
\put(0.7,2.5){\vector(-1,-1){0.1}}
\multiput(8.2,12.2)(-0.3,-0.32){18}{\circle*{0.1}}
\put(2.8,6.4){\vector(-1,-1){0.1}}
\end{picture} \label{twoMor}
\eeq

By induction we can construct more elaborate morphisms between $\mathcal{C}_1$ and $\mathcal{C}_2$ if given a morphism with topmost category $\mathcal{C}_{-p}$ we can find yet another one $\mathcal{C}_{-(p+1)}$ with a morphism to $\mathcal{C}_{-p}$, and by composition to all lower categories $\mathcal{C}_i$, $i=1,2,0,-1,\cdots, -(p-1), -p$ as well. If we arrange all these lower categories in a circle, we obtain a \textbf{cone} with vertex $\mathcal{C}_{-(p+1)}$. We can now define a morphism between two small categories $\mathcal{C}_1$ and $\mathcal{C}_2$ of Cat$^{\triangleq}$ to be given by an adjunction between them as well as a sequence of morphisms $\mathcal{C}_{-p} \rightarrow \mathcal{C}_{-(p-1)} \rightarrow \cdots \rightarrow \mathcal{C}_0$, morphisms in Cat, with the last category $\mathcal{C}_0$ having a morphism to both $\mathcal{C}_1$ and $\mathcal{C}_2$ as in \eqref{simplecone}. A morphism between two such objects of Cat$^{\triangleq}$ will also be referred to as a cone over $\mathcal{C}_1$ and $\mathcal{C}_2$ with vertex $\mathcal{C}_{-p}$. Observe that we have been using the terminology ``morphism between objects" as opposed to saying ``morphism from ... to ...". If $\mathcal{C}_1$ and $\mathcal{C}_2$ are two objects of Cat$^{\triangleq}$, with a morphism $\phi$ between them given in part by an adjunction $f: \mathcal{C}_1 \rightleftarrows \mathcal{C}_2:g$, then we can talk about a morphism from $\mathcal{C}_1$ to $\mathcal{C}_2$ as being $\phi$ where we regard $f$ as being a left adjoint and $g$ a right adjoint. By symmetry, we can talk about this same $\phi$ as also being a morphism from $\mathcal{C}_2$ to $\mathcal{C}_1$ with now $g$ being a left adjoint and $f$ a right adjoint. For simplicity of notation, writing $\mathcal{C} \rightarrow \mathcal{C}'$ will mean that there exists a morphism between $\mathcal{C}$ and $\mathcal{C}'$. The identity morphism is given by the natural morphism from the trivial category $\emptyset$ to an object $\mathcal{C}$ of Cat$^{\triangleq}$. Composition of morphism is defined by the product of the vertices of each cone. If we have a morphism $f$ between categories $\mathcal{C}_1$ and $\mathcal{C}_2$ given by a cone with vertex $\mathcal{C}_{-p}^{(1,2)}$ and a morphism $g$ between $\mathcal{C}_2$ and $\mathcal{C}_3$ given by a cone with vertex $\mathcal{C}_{-q}^{(2,3)}$, then the composition $g \circ f$ is given by the cone resulting from taking the product of categories $\mathcal{C}_{-p}^{(1,2)} \times \mathcal{C}_{-q}^{(2,3)}$ as this category has a projection to $\mathcal{C}_{-p}^{(1,2)}$ and $\mathcal{C}_{-q}^{(2,3)}$, and thus to all lower  categories by composition. This is represented as follows:
\beq
\setlength{\unitlength}{0.5cm}
\begin{picture}(9,10)(0,0)
\put(0,0){$\mathcal{C}_1$}
\put(4,0){$\mathcal{C}_2$}
\put(0.9,0.5){\vector(1,0){2.8}}
\put(3.7,0.2){\vector(-1,0){2.8}}
\put(8,0){$\mathcal{C}_3$}
\put(4.9,0.5){\vector(1,0){2.8}}
\put(7.7,0.2){\vector(-1,0){2.8}}
\put(1,4){$\mathcal{C}_{-p}^{(1,2)}$}
\put(1.3,3.8){\vector(-1,-3){1}}
\put(2.4,3.7){\vector(1,-2){1.5}}
\put(5,4){$\mathcal{C}_{-q}^{(2,3)}$}
\put(5.3,3.8){\vector(-1,-3){1}}
\put(6.4,3.7){\vector(1,-2){1.4}}
\put(1.5,8.8){$\mathcal{C}_{-p}^{(1,2)} \times \mathcal{C}_{-q}^{(2,3)}$}
\put(3,8){\vector(-1,-2){1.3}}
\put(4.6,8){\vector(1,-2){1.3}}
\end{picture} \label{compo}\\ \\
\eeq
which yields a morphism from $\mathcal{C}_1$ to $\mathcal{C}_3$ as depicted below:
\beq
\setlength{\unitlength}{0.5cm}
\begin{picture}(7,7)(0,0)
\put(-1,1){$\mathcal{C}_1$}
\put(0,1.5){\vector(1,0){5}}
\put(5,1.2){\vector(-1,0){5}}
\put(5.2,1){$\mathcal{C}_3$}
\put(0.5,5.5){$\mathcal{C}_{-p}^{(1,2)} \times \mathcal{C}_{-q}^{(2,3)}$}
\put(2,5){\vector(-2,-3){2}}
\put(3,5){\vector(2,-3){2}}
\end{picture}
\eeq
where for simplicity we have only displayed the vertex of the cone and all intermediate categories and morphisms are implied. This definition of composition makes the identity morphism satisfy $id \circ f = f = f\circ id$ for any morphism $f$ owing to the fact that for any category $\mathcal{C}$, $\emptyset \times \mathcal{C} \simeq \mathcal{C} \simeq \mathcal{C} \times \emptyset$. Associativity is also immediate and can be represented graphically with cones only as done below:
\beq
\setlength{\unitlength}{0.5cm}
\begin{picture}(20,20)(0,0)
\put(3,3){\oval(6,4)}
\put(8,4){\oval(4,2)}
\put(9,2){\oval(2,2)[l]}
\put(9,2){\oval(8,2)[r]}
\put(3,0.8){$\times$}
\put(5.7,3.6){$\times$}
\put(8.8,2.8){$\times$}
\put(12.6,2){$\times$}
\put(3,0){$\mathcal{C}_1$}
\put(5.6,4.8){$\mathcal{C}_2$}
\put(9,2){$\mathcal{C}_3$}
\put(13.5,2){$\mathcal{C}_4$}
\put(3,8.8){$\mathcal{C}_{-p}^{(1,2)}$}
\put(8.5,8.8){$\mathcal{C}_{-q}^{(2,3)}$}
\put(14.5,10.8){$\mathcal{C}_{-r}^{(3,4)}$}
\thicklines
\put(3.2,8){\vector(0,-1){6.2}}
\put(2.3,5.5){$u$}
\put(3.5,8){\vector(2,-3){1.7}}
\put(9,8){\vector(0,-1){4.5}}
\put(8.5,8){\vector(-3,-4){2}}
\put(14,10.5){\vector(-2,-3){4.5}}
\put(15,10){\vector(-1,-4){1.8}}
\put(15,6){$v$}
\put(4,12.5){$\mathcal{C}_{-p}^{(1,2)} \times \mathcal{C}_{-q}^{(2,3)}$}
\put(5,12){\vector(-2,-3){1.3}}
\put(3,11){$p_1$}
\put(7,12){\vector(1,-1){2}}
\put(9,19){$(\mathcal{C}_{-p}^{(1,2)} \times \mathcal{C}_{-q}^{(2,3)}) \times \mathcal{C}_{-r}^{(3,4)}$}
\put(11,18){\vector(-1,-1){3.7}}
\put(7.8,16.5){$p_1$}
\put(14,18){\vector(1,-4){1.3}}
\put(15,16){$p_2$}
\end{picture}
\eeq
This cone is clearly equivalent to the following cone:
\beq
\setlength{\unitlength}{0.5cm}
\begin{picture}(20,20)(0,0)
\put(3,3){\oval(6,4)}
\put(8,4){\oval(4,2)}
\put(9,2){\oval(2,2)[l]}
\put(9,2){\oval(8,2)[r]}
\put(3,0.8){$\times$}
\put(5.7,3.6){$\times$}
\put(8.8,2.8){$\times$}
\put(12.6,2){$\times$}
\put(3,0){$\mathcal{C}_1$}
\put(5.6,4.8){$\mathcal{C}_2$}
\put(9,2){$\mathcal{C}_3$}
\put(13.5,2){$\mathcal{C}_4$}
\put(3,8.8){$\mathcal{C}_{-p}^{(1,2)}$}
\put(8.5,8.8){$\mathcal{C}_{-q}^{(2,3)}$}
\put(12.5,6){$\mathcal{C}_{-r}^{(3,4)}$}
\thicklines
\put(3.2,8){\vector(0,-1){6.2}}
\put(2.3,5.5){$u$}
\put(3.5,8){\vector(2,-3){1.7}}
\put(9,8){\vector(0,-1){4.5}}
\put(8.5,8){\vector(-3,-4){2}}
\put(12.5,5.5){\vector(-3,-2){3}}
\put(13,5.5){\vector(0,-1){2.3}}
\put(13.5,4){$v$}
\put(10,14){$\mathcal{C}_{-q}^{(2,3)} \times \mathcal{C}_{-r}^{(3,4)}$}
\put(11,13){\vector(-2,-3){1.7}}
\put(13,13){\vector(0,-1){5.3}}
\put(13.5,11){$p_2$}
\put(4,18){$\mathcal{C}_{-p}^{(1,2)} \times (\mathcal{C}_{-q}^{(2,3)} \times \mathcal{C}_{-r}^{(3,4)})$}
\put(6,17){\vector(-1,-4){1.7}}
\put(4,15){$p_1$}
\put(10,17){\vector(1,-1){2}}
\put(11,16.5){$p_2$}
\end{picture}
\eeq
One interesting peculiarity of Cat$^{\triangleq}$ is that for two objects $\mathcal{C}_1$ and $\mathcal{C}_2$ of Cat$^{\triangleq}$ with a morphism $\Phi$ between them given in part by an adjunction $f \dashv g$ where $f: \mathcal{C}_1 \rightleftarrows \mathcal{C}_2 :g$, then $\Phi$ can equivalently be seen as a morphism $g: \mathcal{C}_2 \rightleftarrows \mathcal{C}_1 :f$. If $\Phi$ is seen as a morphism from $\mathcal{C}_1$ to $\mathcal{C}_2$ then $f \dashv g$ means $f$ is a left adjoint and $g$ a right adjoint. Likewise if $\Phi$ is seen as a morphism from $\mathcal{C}_2$ to $\mathcal{C}_1$, then $g \dashv f$ means $g$ is left adjoint.\\

This may lead to confusion however. Consider the following diagram:
\beq
\setlength{\unitlength}{0.5cm}
\begin{picture}(7,6)(0,-1)
\put(0,3.8){$\mathcal{C}_1$}
\put(1,4){\vector(1,0){4}}
\put(5.8,3.8){$\mathcal{C}_2$}
\put(6,3){\vector(0,-1){2.8}}
\put(5.8,-1){$\mathcal{C}_3$}
\put(1,3){\vector(4,-3){4}}
\put(2.5,4.5){$\phi$}
\put(6.5,2){$\psi$}
\put(2,1){$\zeta$}
\end{picture} \nonumber
\eeq
where $\phi$ is given in part by $f_{12}: \mathcal{C}_1 \rightleftarrows \mathcal{C}_2 :g_{12}$, $\psi$ is given in part by $f_{23}: \mathcal{C}_2 \rightleftarrows \mathcal{C}_3 : g_{23}$ and $\zeta$ is given in part by $f_{13}: \mathcal{C}_1 \rightleftarrows \mathcal{C}_3 : g_{13}$. Then $\psi$ can be viewed as a morphism $\mathcal{C}_3 \xrightarrow{\psi^{\supset}} \mathcal{C}_2$ with $g_{23} \dashv f_{23}$ and the cone for $\psi$ flipped accordingly. One may be tempted to write:
\beq
\setlength{\unitlength}{0.5cm}
\begin{picture}(7,6)(0,-1)
\put(0,3.8){$\mathcal{C}_1$}
\put(1,4){\vector(1,0){4}}
\put(5.8,3.8){$\mathcal{C}_2$}
\put(6,0.2){\vector(0,1){2.8}}
\put(5.8,-1){$\mathcal{C}_3$}
\put(1,3){\vector(4,-3){4}}
\put(2.5,4.5){$\phi$}
\put(6.5,1){$\psi^{\supset}$}
\put(2,1){$\zeta$}
\end{picture} \nonumber
\eeq
This is not possible however by definition of the composition of morphisms in Cat$^{\triangleq}$. If one were to accept a diagram such as this one then the horizontal arrow would no longer be $\phi$ but another morphism $\xi=\psi^{\supset} \circ \zeta$ with $\zeta$ itself equal to the composition $\psi \circ \phi$. This rigidity is nevertheless a good thing insofar as it enables one to keep track of previous compositions.\\

Recall that a category is said to be connected if it is inhabited and there is a finite sequence of morphisms between any two of its objects. We generalize this definition to the case of possibly infinite sequences of such morphisms and state the following conjecture:
\begin{Conj}
Cat$^{\triangleq}$ is a connected category.
\end{Conj}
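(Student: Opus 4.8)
The plan is to show that the underlying undirected graph of Cat$^{\triangleq}$ --- vertices the objects, an edge joining $\mathcal{C}_1$ and $\mathcal{C}_2$ whenever there is a morphism between them --- is connected, and then invoke the generalized notion of connectedness (allowing possibly infinite sequences) together with the evident fact that Cat$^{\triangleq}$ is inhabited. First I would unpack what a morphism between $\mathcal{C}_1$ and $\mathcal{C}_2$ actually costs: by definition it is an adjunction between the two categories together with a cone, i.e.\ a string $\mathcal{C}_{-p} \to \cdots \to \mathcal{C}_0$ in Cat with $\mathcal{C}_0$ mapping to both $\mathcal{C}_1$ and $\mathcal{C}_2$. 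Every object of Cat$^{\triangleq}$ is nonempty --- the empty category admits an adjunction only with itself, hence not with \emph{another} category, so it is not an object of Cat$^{\triangleq}$ --- so the cone part is free: one may always take $\mathcal{C}_0 = \emptyset$, the trivial category, which maps into every nonempty category and into itself. Hence an edge between $\mathcal{C}_1$ and $\mathcal{C}_2$ exists if and only if there is an adjunction between them, and the conjecture reduces to connectedness of the ``adjunction graph'' on small categories admitting a nontrivial adjunction.

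I would then fix $\emptyset$ as a hub --- it is an object of Cat$^{\triangleq}$, e.g.\ because it admits an adjunction with $\mathbf{2} = \{0 \to 1\}$ --- and try to connect every object to it. Two classes of objects are adjacent to $\emptyset$ at once: if $\mathcal{C}$ has an initial object the unique functor $\mathcal{C} \to \emptyset$ has a left adjoint, and if $\mathcal{C}$ has a terminal object it has a right adjoint, so in either case there is an adjunction $\mathcal{C} \rightleftarrows \emptyset$, hence an edge. It therefore suffices to connect an arbitrary object, by a possibly infinite chain of adjunctions, to one having an initial or terminal object. Here I would marshal a toolbox of cheap edges: (i) for any $\mathcal{D}$ with a terminal object one has an adjunction $\mathcal{C} \rightleftarrows \mathcal{C} \times \mathcal{D}$, given by $p_1 \dashv (-, t_{\mathcal{D}})$, so $\mathcal{C}$ may be fattened freely; (ii) the inclusion of any reflective or coreflective subcategory is an edge; (iii) and --- the engine foreshadowed in the introduction --- the blow-up $\pi \colon \widetilde{\mathcal{C}_c} \to \mathcal{C}$ at an object $c$, which removes $c$ and glues in the exceptional divisor $E(c)$ of non-composite incoming arrows: one shows that $\pi$, or a suitable companion functor, is one half of an adjunction $\mathcal{C} \rightleftarrows \widetilde{\mathcal{C}_c}$, so each blow-up is an edge, and iterating blow-ups --- transfinitely if need be, which is precisely why infinite sequences were permitted --- drives $\mathcal{C}$ toward a category simple enough (eventually with an initial object, or $\emptyset$ itself) that adjacency to $\emptyset$ is visible. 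Concatenating $\mathcal{C}_1 \leadsto \emptyset \leadsto \mathcal{C}_2$ then yields the theorem.

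The main obstacle is step (iii), where essentially all the content lies: adjunctions are rigid, each forcing a whole family of universal properties, so it is far from clear that two arbitrary small categories can be linked by a chain of them --- the role of the blow-up calculus is exactly to supply such links. Concretely the difficulty splits in two: showing that the blow-down $\pi$ genuinely underlies an adjunction between $\mathcal{C}$ and $\widetilde{\mathcal{C}_c}$ rather than being a bare functor; and controlling the (transfinite) iteration of blow-ups so that it terminates at, or limits to, a category adjacent to $\emptyset$. I would also record one tempting non-solution: although $\text{id}_{\mathcal{C}}$ is defined as ``the natural morphism from $\emptyset$ to $\mathcal{C}$'', a genuine morphism \emph{between} $\emptyset$ and $\mathcal{C}$ needs an adjunction $\emptyset \rightleftarrows \mathcal{C}$, which exists only when $\mathcal{C}$ has an initial or terminal object, so the general case cannot be sidestepped this way.
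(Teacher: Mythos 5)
This statement is a \emph{conjecture} in the paper: no proof is given, and the blow-up, crystal, and tangent-stackoid constructions that follow are introduced only as machinery the author ``anticipates to be instrumental'' in a future proof. So there is no proof of record to compare yours against, and the real question is whether your proposal closes the gap. It does not, and you say so yourself. Your preliminary reductions are genuinely useful and correct as far as they go: since every object of Cat$^{\triangleq}$ is nonempty, the cone part of a morphism is always satisfiable with $\mathcal{C}_0$ the one-object category (which is what the paper's ``trivial category $\emptyset$'' must be, given that $\emptyset \times \mathcal{C} \simeq \mathcal{C}$ is asserted), so connectedness of Cat$^{\triangleq}$ reduces to connectedness of the adjunction graph; and the hub observation --- any category with an initial or a terminal object is adjacent to $\mathbf{1}$ via an adjoint to the unique functor to $\mathbf{1}$ --- is a clean first step that the paper does not make. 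This is a sharper formulation of the problem than the paper offers.

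But step (iii), where as you note ``essentially all the content lies,'' is exactly the open conjecture, and the proposed engine does not obviously run. First, nothing in the paper's definition of $\widetilde{\mathcal{C}}_c$ suggests that the blow-down $\pi$ is one half of an adjunction; it is not even clear that $\widetilde{\mathcal{C}}_c$ is a category (composites of arrows of $E(c)$ with arrows of $\mathcal{C}-c$ are not accounted for) or that $\pi$ is a functor, let alone that it has the universal property an adjoint requires. Asserting that ``one shows'' this is precisely the missing mathematics. Second, even granting each blow-up is an edge, you give no argument that iterating drives an arbitrary small category toward one with an initial or terminal object, nor any well-founded measure guaranteeing termination or a sensible limit. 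Third, the appeal to ``possibly infinite sequences of morphisms'' to license transfinite chains inherits an ambiguity from the paper itself: an infinite chain has no last term, so it is not clear in what sense it connects two given objects, and a proof would have to fix that definition before using it. In short, your proposal is a plausible and somewhat more organized version of the paper's own program, but the conjecture remains unproved by it.
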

We introduce a construction over Cat$^{\triangleq}$ which we anticipate to be instrumental in proving this conjecture. We first put a topology on Cat$^{\triangleq}$.

\subsection{Morphism topology on Cat$^{\triangleq}$}
\begin{Xcount}
Let $X$ denote the number of products of categories resulting from a composition in the cone associated to a morphism between any two objects of Cat$^{\triangleq}$.
\end{Xcount}
\begin{metric}
A morphism metric $d$ on Cat$^{\triangleq}$ is a function:
\begin{align}
\Big(Ob(\text{Cat}^{\triangleq}) \times Ob(\text{Cat}^{\triangleq})\Big) &\times_{\text{Mor}(\text{Cat}^{\triangleq})} \text{Arr}(\text{Cat}^{\triangleq})\rightarrow \{0\} \cup \mathbb{N} \cup \{ \infty \} \nonumber \\
(\mathcal{C}_1, \mathcal{C}_2, \phi) \mapsto d(\mathcal{C}_1, \mathcal{C}_2, \phi)&=\left\{
                                         \begin{array}{rl}
                                           0, & \text{if } \mathcal{C}_1=\mathcal{C}_2, \; \phi=id_{\mathcal{C}_1} \\
                                           X(\phi)+1, & \text{if } \phi \neq id \\
                                           \infty, & \text{if } \text{Hom}(\mathcal{C}_1, \mathcal{C}_2) = \emptyset
                                         \end{array}
                                       \right. \nonumber
\end{align}
\end{metric}
Clearly $d(\mathcal{C}_1, \mathcal{C}_2, \phi)=0 \Rightarrow \mathcal{C}_1=\mathcal{C}_2$ and $d(\mathcal{C}_1, \mathcal{C}_2, \phi)=d(\mathcal{C}_2, \mathcal{C}_1, \phi)$. Further if there are morphisms $\phi$ between $\mathcal{C}_1$ and $\mathcal{C}_2$, and $\psi$ between $\mathcal{C}_2$ and $\mathcal{C}_3$, then $d(\mathcal{C}_1, \mathcal{C}_2, \phi) + d(\mathcal{C}_2, \mathcal{C}_3, \psi) = d(\mathcal{C}_1, \mathcal{C}_3, \psi \circ \phi)$. If there is no morphism between at least one pair of categories above then the inequality $\infty + X+1 =\infty$ holds.\\

For $\mathcal{C}$ in Cat$^{\triangleq}$, $r \in \mathbb{N} \cup \{\infty \}$, we define the ball:
\beq
B(\mathcal{C}, r)=\{ \mathcal{C}' \xrightarrow{\phi} \mathcal{C} \quad | \quad d(\mathcal{C}', \mathcal{C}, \phi)<r \}  \nonumber
\eeq
Note that if $r=1$, then:
\beq
B(\mathcal{C}, 1)=\{\mathcal{C}' \xrightarrow{\phi} \mathcal{C} \quad | \quad d(\mathcal{C}', \mathcal{C}, \phi)=0 \}= \{ \mathcal{C} \circlearrowleft id_{\mathcal{C}} \} \nonumber
\eeq
and if $r=\infty$, then:
\beq
B(\mathcal{C}, \infty)=\{\text{all } \mathcal{C}' \rightarrow \mathcal{C} \} \nonumber
\eeq
\newline
The collection of all balls $B(\mathcal{C},r)$ for $\mathcal{C} \in Ob(\text{Cat}^{\triangleq})$, $r \in \mathbb{N}\cup \{\infty\}$, forms a basis for a topology on Cat$^{\triangleq}$ that we call the \textbf{morphism topology}.\\

\subsection{Crystals of categories}
In proving the connectedness of Cat$^{\triangleq}$, one may wish to prove that the geometric realization of its nerve is a connected topological space. In order to better appreciate the geometric realization of the nerve of Cat$^{\triangleq}$, we wish to consider neighborhoods of each category defined by those morphisms for which $X=0$. We are led to consider the blow up of a category at an object, which we define below:
\begin{blowup}
Let $\mathcal{C}$ be a category, $c$ an object of $\mathcal{C}$. We define the blow up of $\mathcal{C}$ at $c$ to be the category
\beq
\widetilde{\mathcal{C}}_c=\mathcal{C}-c \bigcup_{\pi}E
\eeq
where the exceptional divisor $E$ is defined to be the collection of objects of $\mathcal{C}$ that are connected to $c$ by at least one morphism, and of those morphisms between $c$ and any such object that do not result from a composition of morphisms in $\mathcal{C}$. We have a projection morphism $\pi: \tilde{\mathcal{C}}_c \rightarrow \mathcal{C}$ that is the identity morphism away from $c$ and projects $E$ down to $c$. The gluing in $\widetilde{\mathcal{C}}_c$ is done via identity morphisms for those objects that are in $E$.
\end{blowup}
In our context, this becomes:
\begin{blowup2}
We define the blow up of Cat$^{\triangleq}$ at some object $\mathcal{C}$ to be given by the category:
\beq
\widetilde{\text{Cat}^{\triangleq}}_{\mathcal{C}}=\text{Cat}^{\triangleq}-\mathcal{C} \bigcup_{\pi}B(\mathcal{C},2)
\eeq
where $B(\mathcal{C},2)$ is really $\{ \mathcal{C} \circlearrowleft id_{\mathcal{C}}\} \cup \{\mathcal{C}' \xrightarrow{\phi} \mathcal{C} \; | \; X(\phi)=0 \}$. Morphisms between any two objects of this category are defined to be the adjunctions only of morphisms between the same objects in the base category Cat$^{\triangleq}$.
\end{blowup2}
Connecting such blowups for neighboring categories in Cat$^{\triangleq}$, categories for wich there is at least one morphism between them with $X=0$, we obtain what we call a \textbf{crystal of categories}. We abbreviate the word adjunction in the definition that follows by the symbol $\dashv$.
\begin{crystal}
\beq
\widetilde{\text{Cat}^{\triangleq}}_{X=0}=\bigcup_{\substack{X=0 \:\dashv 's \\ \mathcal{C} \in \text{Cat}^{\triangleq}}}\widetilde{\text{Cat}^{\triangleq}}_{\mathcal{C}}
\eeq
\end{crystal}
Though not a category itself, this crystal of categories is a \textbf{stackoid} in the following sense: for $\mathcal{U}$ an open set about an object $\mathcal{C}$ of Cat$^{\triangleq}$, if we write $\mathcal{F}_i=\widetilde{\text{Cat}^{\triangleq}}_{\mathcal{C}_i}$, and if for any two objects $\mathcal{C}_i$ and $\mathcal{C}_j$ of $\mathcal{U}$ we have:
\beq
\mathcal{U} \cap B(\mathcal{C}_i,2) \cap B(\mathcal{C}_j,2)=
\Bigg \{\quad
\begin{CD}
\mathcal{C}_{ij;k} @>X=0>> \mathcal{C}_j \\
@VVX=0V @VVV\\
\mathcal{C}_i @>>> \mathcal{C}
\end{CD}
\text{ in } \mathcal{U}\:|\: k \in K \Bigg\}
\eeq
$K$ an indexing set. Then we have:
\beq
\mathcal{F}_i\Big|_{\mathcal{C}_{ij;k}}=\widetilde{\text{Cat}^{\triangleq}}_{\mathcal{C}_{ij;k}}
=\mathcal{F}_j\Big|_{\mathcal{C}_{ij;k}}
\eeq
for all $k \in K$. Further such a descent datum is effective since $\widetilde{\text{Cat}^{\triangleq}}_{\mathcal{C}}\Big|_{\mathcal{C}_i}=\mathcal{F}_i$. We regard $\widetilde{\text{Cat}^{\triangleq}}_{X=0}$ as being easier to deal with than the original category Cat$^{\triangleq}$ itself. Further, instead of proving the geometric realization of its nerve to be connected, we think it would be somewhat easier to work with the following object:
\begin{tangentsheaf}
The \textbf{tangent stackoid} of $\widetilde{\text{Cat}^{\triangleq}}$ is defined to be:
\beq
T\widetilde{\text{Cat}^{\triangleq}}_{X=0}:=\bigcup_{\substack{X=0 \:\dashv 's \\ \mathcal{C} \in \text{Cat}^{\triangleq}}}B(\mathcal{C},2)
\eeq
\end{tangentsheaf}
The tangent stackoid can be represented as an $\mathbb{S}^{\infty}$ sphere which we define below:
\begin{Sinfty}
Within a given formalism, an $\mathbb{S}^{\infty}$ sphere is an infinite collection of points, each of which is connected (in a sense to be precised by the context) to infinitely many other points.
\end{Sinfty}
For $T\widetilde{\text{Cat}^{\triangleq}}_{X=0}$, fix $\mathcal{C}$ an object of Cat$^{\triangleq}$, and pinch it out of $B(\mathcal{C},2)$ so that one obtains a cone-like picture with $\mathcal{C}$ as its vertex. If we glue all other balls to $B(\mathcal{C},2)$ to obtain the tangent stackoid $T\widetilde{\text{Cat}^{\triangleq}}_{X=0}$, by composition $\mathcal{C}$ is connected to infinitely many other objects of Cat$^{\infty}$, hence the $\mathbb{S}^{\infty}$ picture. We write $\mathbb{S}^{\infty}[\text{Cat}^{\triangleq}]$ for an $\mathbb{S}^{\infty}$ in this formalism.\\

For two morphisms $f$ and $g$ that can be composed as in $\mathcal{C}_1 \xrightarrow{f} \mathcal{C}_2 \xrightarrow{g} \mathcal{C}_3$, the use of $g$ follows that of $f$. This introduces a convenient notion of time. We thus regard such a composition as a two steps process. We refer to the composition of two $X=0$ morphisms as an \textbf{elementary composition}. The composition of three $X=0$ morphisms $f$, $g$ and $h$ as in $\mathcal{C}_1 \xrightarrow{f} \mathcal{C}_2 \xrightarrow{g} \mathcal{C}_3 \xrightarrow{h} \mathcal{C}_4$ is seen as the overlap of the following two elementary compositions: $\mathcal{C}_1 \xrightarrow{f} \mathcal{C}_2 \xrightarrow{g} \mathcal{C}_3$ and $\mathcal{C}_2 \xrightarrow{g} \mathcal{C}_3 \xrightarrow{h} \mathcal{C}_4$, the overlap happening over $g$. If there are morphisms between objects of a given category, to say we use one of those morphisms means we consider the application of such a morphism on a given source to obtain the corresponding target. To say that overlapping elementary compositions are \textbf{used quasi-simultaneously} means that if $\mathcal{C}_1 \xrightarrow{f} \mathcal{C}_2 \xrightarrow{g} \mathcal{C}_3$ and $\mathcal{C}_2 \xrightarrow{g} \mathcal{C}_3 \xrightarrow{h} \mathcal{C}_4$ are two such elementary compositions, once in the first composition $g$ is being used, the second composition is being used also, starting by $g$ of course. If there is a sequence of elementary, overlapping compositions that is used quasi-simultaneously in some category, then we say we have a \textbf{flow} along such a sequence. If such a sequence is closed in the sense that such a sequence does not have a final object, for example if it is cyclic, then the flow is said to be \textbf{conservative}. In the case of $\text{Cat}^{\triangleq}$ if we have a flow along all closed sequences of morphisms, and if in addition they can all be used simultaneously in a compatible manner, then we say we have a flow on $\mathbb{S}^{\infty}[\text{Cat}^{\triangleq}]$.
\begin{SinftyConj}
On $\mathbb{S}^{\infty}[\text{Cat}^{\triangleq}]$ there is at least one flow. Further if there is one flow, there is infinitely many others.
\end{SinftyConj}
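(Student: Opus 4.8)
The plan is to treat the two assertions separately and to reduce the first one to a gluing problem over the stackoid $\widetilde{\text{Cat}^{\triangleq}}_{X=0}$. For the existence of a flow on $\mathbb{S}^{\infty}[\text{Cat}^{\triangleq}]$ one has to exhibit a flow along \emph{every} closed sequence of $X=0$ morphisms, all used simultaneously in a compatible way. The key observation is that the identity morphisms already furnish such flows locally: every object $\mathcal{C}$ of $\text{Cat}^{\triangleq}$ carries $\mathrm{id}_{\mathcal{C}}$, whose cone has vertex $\emptyset$, so $X(\mathrm{id}_{\mathcal{C}})=0$ because $\emptyset\times\mathcal{C}\simeq\mathcal{C}$ creates no product; hence the self-loop $\mathcal{C}\xrightarrow{\mathrm{id}}\mathcal{C}\xrightarrow{\mathrm{id}}\mathcal{C}\to\cdots$ is a closed sequence of $X=0$ morphisms, its successive elementary compositions overlap over $\mathrm{id}_{\mathcal{C}}$, and assigning the time $k\in\mathbb{Z}$ to the $k$-th overlap makes them used quasi-simultaneously, i.e. a conservative flow. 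More generally any closed sequence $\mathcal{C}_1\xrightarrow{f_1}\cdots\xrightarrow{f_n}\mathcal{C}_1$ of $X=0$ morphisms unrolls to a $\mathbb{Z}$-indexed chain of overlapping elementary compositions, and driving all of these by one and the same integer clock — the $k$-th overlap of every closed sequence occurring at time $k$ — makes the quasi-simultaneity conditions hold simultaneously by construction. First I would record this single-clock datum and note that over each ball $B(\mathcal{C},2)$ it depends only on the $X=0$ morphisms into $\mathcal{C}$, so it is a section of exactly the sort of local data the crystal $\widetilde{\text{Cat}^{\triangleq}}_{X=0}$ is built from.

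The next step is compatibility on overlaps. Given objects $\mathcal{C}_i,\mathcal{C}_j$ in an open set $\mathcal{U}$ and an $X=0$ commuting square with corners $\mathcal{C}_{ij;k}$, $\mathcal{C}_i$, $\mathcal{C}_j$, $\mathcal{C}$ as in the stackoid axiom, the single-clock flow restricted from $B(\mathcal{C}_i,2)$ and the one restricted from $B(\mathcal{C}_j,2)$ are, over $\mathcal{C}_{ij;k}$, both equal to the flow obtained by unrolling the closed sequences through $\mathcal{C}_{ij;k}$ along the same clock, so they coincide; this is precisely the equality $\widetilde{\text{Cat}^{\triangleq}}_{\mathcal{C}_{ij;k}}=\mathcal{F}_i|_{\mathcal{C}_{ij;k}}=\mathcal{F}_j|_{\mathcal{C}_{ij;k}}$ used to define the stackoid, and effectiveness of that descent datum then produces a single flow on $\mathbb{S}^{\infty}[\text{Cat}^{\triangleq}]$. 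The same conclusion can be reached through the $\mathbb{S}^{\infty}$ picture: after pinching a chosen $\mathcal{C}$ out of $B(\mathcal{C},2)$ and gluing all the balls, $\mathbb{S}^{\infty}[\text{Cat}^{\triangleq}]$ is modelled on the infinite-dimensional sphere $S^{\infty}$, and one transports to it a ``rotation'' $\mathbb{R}$-action — the combinatorial analogue of the free circle action on $S^{\infty}$ — whose orbits are precisely the closed sequences traversed at unit speed.

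For the second assertion, assume a flow $\mathcal{F}$ is given (for instance the single-clock one just built) and act on it by reparametrizations of the clock. For each order-preserving injection $\sigma$ of the time axis the rescaled timing still satisfies quasi-simultaneity and still glues over the overlaps, hence defines a flow $\mathcal{F}_{\sigma}$, and reparametrizations of different \emph{rate} — e.g. the dilations $k\mapsto sk$, $s\in\mathbb{N}_{\ge1}$ — traverse the closed sequences at genuinely different speeds and so yield pairwise distinct flows; since there are infinitely many such $\sigma$ there are infinitely many flows. A variant that sidesteps any question of an absolute time origin is to fix one object $\mathcal{C}_0$ and let $\mathcal{F}_s$ be $\mathcal{F}$ with the self-loop at $\mathcal{C}_0$ alone run at speed $s$, everything else unchanged: the self-loop at $\mathcal{C}_0$ uses only $\mathrm{id}_{\mathcal{C}_0}$, which is shared with no other closed sequence, so $\mathcal{F}_s$ is again a flow, and the $\mathcal{F}_s$, $s\in\mathbb{N}_{\ge1}$, are pairwise distinct.

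The main obstacle I anticipate is not the production of local flows — the identity morphisms make those essentially automatic — but the coherence of quasi-simultaneity across closed sequences that \emph{share} morphisms: a morphism appearing at step $a$ of one closed sequence and step $b\neq a$ of another must be allowed to be ``used'' at two different clock readings, and one must argue that this is consistent with the definition of a flow on $\mathbb{S}^{\infty}[\text{Cat}^{\triangleq}]$, i.e. that the crystal's descent datum is effective for the ``timing'' data and not merely for the underlying stackoid. Making the notion of flow precise enough to descend, and verifying that the single-clock construction really furnishes a global section, is where the genuine work lies; the $S^{\infty}$ model, on which free $\mathbb{R}$-actions exist and are infinite in number, is the heuristic that makes an affirmative answer plausible.
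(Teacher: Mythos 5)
The statement you are proving is stated in the paper as a \emph{conjecture}; the paper offers no proof of it, so there is nothing to compare your argument against. Judged on its own terms, your proposal is a programme rather than a proof, and its central step is missing. The definition of a flow on $\mathbb{S}^{\infty}[\text{Cat}^{\triangleq}]$ demands a flow along \emph{all} closed sequences of morphisms, ``used simultaneously in a compatible manner.'' Your single-clock construction assigns time $k$ to the $k$-th overlap of each closed sequence, but as you yourself observe in your final paragraph, a morphism $g$ can occur at position $a$ in one closed sequence and position $b\neq a$ in another; since quasi-simultaneity is defined so that the use of $g$ in one composition \emph{triggers} its use in every overlapping composition, the single clock forces $a=b$, which fails in general. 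This is not a technicality to be deferred --- it is the entire content of the compatibility requirement, and your proposal explicitly leaves it unresolved. Likewise, the appeal to effectiveness of the stackoid's descent datum does not apply as stated: that datum concerns the blow-up categories $\widetilde{\text{Cat}^{\triangleq}}_{\mathcal{C}}$, not timing assignments, and you acknowledge that descent for timing data would have to be established separately.

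Two further points. First, your base case rests on the self-loop $\mathcal{C}\xrightarrow{\mathrm{id}}\mathcal{C}\xrightarrow{\mathrm{id}}\cdots$, but the identity in $\text{Cat}^{\triangleq}$ is the morphism from the empty category, and it is not clear that iterating it produces a genuinely ``closed sequence of elementary, overlapping compositions'' in the sense intended (the paper's examples involve distinct objects $\mathcal{C}_1\to\mathcal{C}_2\to\mathcal{C}_3$); even granting it, a flow supported only on identity loops does not address the closed sequences that actually populate $\mathbb{S}^{\infty}[\text{Cat}^{\triangleq}]$. Second, for the infinitude claim, the paper never defines when two flows are equal, so asserting that the dilated flows $\mathcal{F}_s$ are ``pairwise distinct'' is not yet meaningful; a time-reparametrization of a flow is the canonical example of something one might wish to regard as the \emph{same} flow. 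To make progress here you would first need to promote the paper's informal definitions (use, quasi-simultaneity, compatibility, equality of flows) to precise ones --- at which point the conjecture may well become provable by something like your clock idea, but the proof would live in those definitions, not in the sketch as it stands.
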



\subsection{Fractal $\infty$-categories}
In this subsection we do away with the axiom of regularity from Zermelo-Frankel set theory (\cite{F}, \cite{Z1}, \cite{Z2}) one of whose consequences is that no set can be an element of itself since that is precisely what we aim to do in what follows. An alternative to axiomatic set theory is found in non-well founded set theories \cite{Mi}. We do not regard the ZF axioms or those of non-well founded set theories as mutually exclusive but rather as complementary tools whose use is contextually warranted. Those axioms of non-well founded set theories that are in contradiction with the axiom of regularity are called anti-foundation axioms. We can mention Boffa's work in this regard ( \cite{Bo1}, \cite{Bo2}). We place ourselves within Boffa's universe \textbf{B}. What follows should read as a research announcement.\\

We consider pointed $\mathbb{Z}$-graded category-like objects that, for lack of a better name we will simply refer to as \"uber-categories for reasons that will become clear shortly. If we consider objects and morphisms only, an \"uber-category is nothing but an ordinary category for which there is only one morphism from one object to another. From that perspective, \"uber-categories are constructed from ordinary categories following a ``minimal" splitting principle; as soon as we introduce higher morphisms, a distinction is being made in that if there exists two \textbf{elementary} morphisms $f,g: a \rightarrow b$ in a given category $\mathcal{C}$ (elementary as in not a composition of other morphisms), $a$ and $b$ being objects of $\mathcal{C}$, and if there exists a 2-morphism $\tau: f \Rightarrow g$, then we split $\mathcal{C}$ into two categories, one consisting of $f:a \rightarrow b$, the other of $g:a \rightarrow b$, each resulting category being an \"uber-category in disguised form.\\

Rather than obfuscate the reader with bewildering layers of definitions, we will now delve into \"uber-categories proper rather informally, in the hope that this will make it clear what an \"uber-category is, perhaps to the detriment of more sophistication in the presentation, something we do not see as being entirely necessary for the time being. If we regard the two categories above as elements of a larger category with $\tau$ as a map from one to the other, these two categories become \textbf{\"uber-categories} properly speaking in the sense that once seen as elements of a larger category, their morphisms become objects of the larger category, and higher morphisms such as $\tau$ become morphisms of the larger category. In other terms, isolated \"uber-categories are simply ordinary categories with objects and a single morphism between them. Embedded within larger categories, their morphisms become objects of the larger category. Identity morphisms and compositions are defined as in higher category theory, and associativity is defined within the context of weak $\infty$-categories at the very least, but preferably with $(\infty,1)$-categories in mind.\\

By induction, categories with pairs of objects between which there are multiple elementary morphisms as well as at least one higher morphism from one of those morphisms to another, will split as above into \"uber-categories, each one displaying only one elementary morphism from the category it's originating from. Regarding notation, each \"uber-category resulting from such a split will be labeled by its morphisms such as in $\mathcal{C}^{\{f_i\}_{i \in I}}$, $I$ an indexing set for the set of morphisms in such a category.\\

Concerning morphisms, given a $n$-morphism, $n+1$-morphisms are morphism objects of possibly yet another category, so we place ourselves within the context of enriched category theory, with enrichments taking place at each stage.\\

For $n \in \mathbb{Z}$ fixed, $\mathcal{C}^{(*)}$ one pointed $\mathbb{Z}$-graded \"uber-category, $\mathcal{C}^{(n)}$ is pointed at $\mathcal{C}^{(n-1)}$, one of its elements. Its other elements are all those ($n$-1)-st categories $\mathcal{C}^{(n-1)}$ have morphisms with, a morphism $\phi^{(n-1)}$ between $\mathcal{C}^{(n-1)}$ and another category $\mathcal{D}^{(n-1)}$ being defined to be an adjunction $f_{\phi^{(n-1)}}:\mathcal{C}^{(n-1)} \rightleftarrows \mathcal{D}^{(n-1)}:g_{\phi^{(n-1)}}$, also denoted by $f_{\phi^{(n-1)}} \dashv g_{\phi^{(n-1)}}$. We do not label categories by their morphisms for the time being, as they should, but by $(n-1)$ instead to emphasize at which stage we are working. Suppose we have an elementary morphism $\mu^{(n-1)}$ between two objects $\mathcal{E}^{(n-1)}$ and $\mathcal{D}^{(n-1)}$ of $\mathcal{C}^{(n)}$. If in addition to $\mu^{(n-1)}$ there are other elementary morphisms between $\mathcal{E}^{(n-1)}$ and $\mathcal{D}^{(n-1)}$, then we split $\mathcal{C}^{(n)}$ into as many copies as there are other elementary morphisms between $\mathcal{E}^{(n-1)}$ and $\mathcal{D}^{(n-1)}$. For $\mu^{(n-1)}$ and $\psi^{(n-1)}$ two such morphisms, we have two copies, one that we denote by $\mathcal{C}^{\{\cdots, \mu^{(n-1)}, \cdots \}, (n)}$, the other $\mathcal{C}^{\{\cdots, \psi^{(n-1)}, \cdots \}, (n)}$, with a morphism $\xi^{(n)}$ between them given by an adjunction:
\beq
f_{\xi^{(n)}}:\mathcal{C}^{\{\cdots, \mu^{(n-1)}, \cdots \}, (n)}  \rightleftarrows \mathcal{C}^{\{\cdots, \psi^{(n-1)}, \cdots \}, (n)}:g_{\xi^{(n)}}
\eeq
If such an adjunction is between $\mathcal{C}^{\{\cdots, \mu^{(n-1)}, \cdots \}, (n)}$ and $\mathcal{C}^{\{\cdots, \psi^{(n-1)}, \cdots \}, (n)}$, viewed as objects of some larger category $\mathcal{C}^{(n+1)}$, then $f_{\xi^{(n)}}$ is an object of \\ Hom$_{\mathcal{C}^{(n+1)}}(f_{\mu^{(n-1)}},f_{\psi^{(n-1)}})$, and likewise $g_{\xi^{(n)}}$ is an object of \\ Hom$_{\mathcal{C}^{(n+1)}}(g_{\psi^{(n-1)}},g_{\mu^{(n-1)}})$. \\

Imposing that elementary morphisms between same objects split as opposed to non-elementary ones hold our categories together. Indeed, for a category $\mathcal{C}^{(n)}$ pointed at $\mathcal{C}^{(n-1)}$, suppose we have the following morphisms:
\beq
\setlength{\unitlength}{0.5cm}
\begin{picture}(4,7)(0,-1)
\put(1,-0.5){$\mathcal{C}^{(n-1)}$}
\put(0.8,0.8){$\times$}
\put(3,4){$\times$}
\put(4,4){$\mathcal{D}^{(n-1)}$}
\put(-0.5,3){$\times$}
\put(-3,3){$\mathcal{E}^{(n-1)}$}
\thicklines
\put(1,1){\vector(2,3){2}}
\put(2.5,2){$\psi$}
\put(1,1){\vector(-1,2){1}}
\put(-0.5,2){$\phi$}
\end{picture}
\eeq
Then we have by composition (we work with adjunctions) a morphism from $\mathcal{E}^{(n-1)}$ to $\mathcal{D}^{(n-1)}$ that makes this diagram commutative as in:
\beq
\setlength{\unitlength}{0.5cm}
\begin{picture}(4,7)(0,-1)
\put(1,-0.5){$\mathcal{C}^{(n-1)}$}
\put(0.8,0.8){$\times$}
\put(3,4){$\times$}
\put(4,4){$\mathcal{D}^{(n-1)}$}
\put(-0.5,3){$\times$}
\put(-3,3){$\mathcal{E}^{(n-1)}$}
\thicklines
\put(1,1){\vector(2,3){2}}
\put(2.5,2){$\psi$}
\put(1,1){\vector(-1,2){1}}
\put(-0.5,2){$\phi$}
\multiput(0,3.2)(0.45,0.15){8}{\circle*{0.15}}
\put(2.7,4.1){\vector(3,1){0.8}}
\put(1,4){$\xi$}
\end{picture}
\eeq
If we did not limit ourselves to elementary morphisms, then we would have to split $\psi$ and $\xi \circ \phi$, which would lead to a splitting of all morphisms, and our categories would end up not being very interesting.\\

Another related question is the following. Given two categories $\mathcal{C}^{(n)}$ and $\mathcal{D}^{(n)}$, $\mathcal{C}^{(q)}$ pointed at $\mathcal{C}^{(q-1)}$, $\mathcal{D}^{(q)}$ pointed at $\mathcal{D}^{(q-1)}$, such that $\mathcal{C}^{(p)}$ is never an element of $\mathcal{D}^{(p+1)}$ for $p<n$, can we have a morphism between $\mathcal{C}^{(n)}$ and $\mathcal{D}^{(n)}$? Such a phenomenon we refer to as the \textbf{sudden appearance} of a morphism. Recall that a category $\mathcal{C}$ not being an element of another category $\mathcal{D}$ means that there is no morphism between $\mathcal{C}$ and the base category of $\mathcal{D}$.
\begin{sudden}
There are no sudden appearances of morphisms for pointed $\mathbb{Z}$-graded \"uber-categories as defined above.
\end{sudden}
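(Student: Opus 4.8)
The plan is to argue by contradiction, descending one step in the $\mathbb{Z}$-grading and invoking the hypothesis at the level immediately below $n$. Suppose there were a morphism $\Phi^{(n)}$ between $\mathcal{C}^{(n)}$ and $\mathcal{D}^{(n)}$. By definition this is an adjunction $f_{\Phi^{(n)}}:\mathcal{C}^{(n)}\rightleftarrows\mathcal{D}^{(n)}:g_{\Phi^{(n)}}$, so in particular $\mathcal{C}^{(n)}$ and $\mathcal{D}^{(n)}$ sit as objects of a common category one stage up and $\Phi^{(n)}$ is a morphism of that ambient category between them. The first step is to observe that, $\mathcal{C}^{(n)}$ and $\mathcal{D}^{(n)}$ being pointed at $\mathcal{C}^{(n-1)}$ and $\mathcal{D}^{(n-1)}$ respectively, the functors $f_{\Phi^{(n)}}$ and $g_{\Phi^{(n)}}$ preserve the basepoint, so that $f_{\Phi^{(n)}}(\mathcal{C}^{(n-1)})=\mathcal{D}^{(n-1)}$ and $g_{\Phi^{(n)}}(\mathcal{D}^{(n-1)})=\mathcal{C}^{(n-1)}$.

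The second and main step is to descend this adjunction to the basepoints. Here I would use the defining feature of \"uber-categories that higher morphisms are precisely functors between lower morphisms: in the notation of the splitting discussion above, the functor part $f_{\xi^{(n)}}$ of a level-$n$ morphism lies in $\mathrm{Hom}_{\mathcal{C}^{(n+1)}}(f_{\mu^{(n-1)}},f_{\psi^{(n-1)}})$, so a level-$n$ morphism between pointed $n$-\"uber-categories carries with it, and is controlled by, level-$(n-1)$ functorial data at the basepoints. Reading this in reverse for $\Phi^{(n)}$, together with the basepoint identifications of the first step, produces functors $f':\mathcal{C}^{(n-1)}\rightleftarrows\mathcal{D}^{(n-1)}:g'$, while the unit $\eta$ and counit $\varepsilon$ of $f_{\Phi^{(n)}}\dashv g_{\Phi^{(n)}}$, evaluated along the basepoints, supply the triangle identities for $f'\dashv g'$. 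Thus $f'\dashv g'$ is an adjunction between $\mathcal{C}^{(n-1)}$ and $\mathcal{D}^{(n-1)}$, i.e. a morphism $\Phi^{(n-1)}$ between them, which is to say $\mathcal{C}^{(n-1)}$ is an element of $\mathcal{D}^{(n)}$.

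The third step is to conclude: taking $p=n-1$, which satisfies $p<n$, this contradicts the standing hypothesis that $\mathcal{C}^{(p)}$ is never an element of $\mathcal{D}^{(p+1)}$ for $p<n$. Hence no morphism $\Phi^{(n)}$ can exist, which is exactly the assertion that morphisms cannot appear suddenly. I would also remark that, $f'$ and $g'$ being again maps of pointed objects, the same descent iterates downward, producing morphisms between $\mathcal{C}^{(p)}$ and $\mathcal{D}^{(p)}$ for every $p<n$; this explains why the hypothesis is naturally stated for all $p<n$ rather than for the single value $p=n-1$, and it simultaneously rules out the alternative that $\Phi^{(n)}$ be manufactured by composition out of morphisms living at strictly lower levels.

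The hard part is the second step: making precise that the enrichment data of a level-$n$ morphism genuinely restricts to a bona fide adjunction one level down, rather than to some weaker shadow of it. This rests on the convention, to be pinned down in the formalism, that morphisms of pointed $\mathbb{Z}$-graded \"uber-categories are \emph{pointed} adjunctions whose unit and counit are themselves basepoint-compatible; once that convention is in force the restriction to the basepoints is formal, and the remaining verifications (functoriality of $f'$ and $g'$, and the two triangle identities) are routine diagram chases inherited from those of $f_{\Phi^{(n)}}\dashv g_{\Phi^{(n)}}$.
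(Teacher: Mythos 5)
Your overall strategy (contradiction, produce a morphism one level down between the basepoints, contradict the hypothesis at $p=n-1$) is the same as the paper's, and your conclusion is the right one. But the step that actually carries your argument --- the descent of the adjunction $f_{\Phi^{(n)}}\dashv g_{\Phi^{(n)}}$ to an adjunction between the basepoints $\mathcal{C}^{(n-1)}$ and $\mathcal{D}^{(n-1)}$ --- rests on a convention you introduce yourself (that morphisms of pointed \"uber-categories are \emph{pointed} adjunctions whose functors, unit and counit all restrict along the basepoints). That convention appears nowhere in the formalism: the paper never asserts that $f_{\Phi^{(n)}}$ carries $\mathcal{C}^{(n-1)}$ to $\mathcal{D}^{(n-1)}$, and you flag this yourself as the part ``to be pinned down.'' As written, your second step is therefore an assumption rather than a deduction, and the proof does not close.

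The paper closes the gap by a different and more elementary route that uses only the stated definitions. First, by the splitting construction, any level-$n$ morphism $\tau$ between $\mathcal{C}^{(n)}$ and $\mathcal{D}^{(n)}$ is necessarily a higher morphism from some elementary $f:\mathcal{C}_1^{(n-1)}\rightarrow\mathcal{C}_2^{(n-1)}$ in $\mathcal{C}^{(n)}$ to some elementary $g:\mathcal{D}_1^{(n-1)}\rightarrow\mathcal{D}_2^{(n-1)}$ in $\mathcal{D}^{(n)}$, and --- crucially --- the split forces $\mathcal{C}_i^{(n-1)}=\mathcal{D}_i^{(n-1)}$ for $i=1,2$, since higher morphisms only arise between parallel elementary morphisms sharing source and target. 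Second, by the very definition of a pointed \"uber-category, every object of $\mathcal{C}^{(n)}$ has a morphism with the basepoint $\mathcal{C}^{(n-1)}$, so one composes: basepoint $\mathcal{C}^{(n-1)}\rightarrow\mathcal{C}_1^{(n-1)}$, then $f$, then $\mathcal{C}_2^{(n-1)}=\mathcal{D}_2^{(n-1)}\rightarrow\mathcal{D}^{(n-1)}$. This yields the desired morphism between the basepoints without ever needing the adjunction at level $n$ to be ``pointed.'' If you want to salvage your version, you would have to either add the pointedness convention to the definition of a morphism of pointed \"uber-categories (changing the objects being studied) or replace your step two with the composition argument above.
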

\begin{proof}
Let $\mathcal{C}^{(n)}$ and $\mathcal{D}^{(n)}$ be two such categories with a morphism $\tau$ between them. Then by construction this morphism must be a higher morphism from some $\mathcal{C}_1^{(n-1)} \xrightarrow{f} \mathcal{C}_2^{(n-1)}$ in $\mathcal{C}^{(n)}$ to some $\mathcal{D}_1^{(n-1)} \xrightarrow{g} \mathcal{D}_2^{(n-1)}$ in $\mathcal{D}^{(n)}$, where both $f$ and $g$ are elementary morphisms. Further, we must have $\mathcal{C}_i^{(n-1)}=\mathcal{D}_i^{(n-1)}$, $i=1,2$. We then have a morphism from the base point $\mathcal{C}^{(n-1)}$ of $\mathcal{C}^{(n)}$ to $\mathcal{C}_1^{(n-1)}$, composed with $f$, followed by a morphism from $\mathcal{D}_2^{(n-1)}=\mathcal{C}_2^{(n-1)}$ to the base point $\mathcal{D}^{(n-1)}$ of $\mathcal{D}^{(n)}$, hence a morphism from $\mathcal{C}^{(n-1)}$ to $\mathcal{D}^{(n-1)}$. This means $\mathcal{C}^{(n-1)}$ must be an element of $\mathcal{D}^{(n)}$, a contradiction.
\end{proof}

Pointed $\mathbb{Z}$-graded \"uber-categories being constructed inductively, we could very well have considered sequences of pointed \"uber-categories instead and have worked with those objects. We will see how that approach is potentially more amenable to obtaining a result about the connectedness of the $\infty$-category of such graded \"uber-categories below. We are more interested in $\mathbb{Z}$-graged \"uber-categories insofar as from a given rank $n \in \mathbb{Z}$ we can study lower morphisms between lower categories. This amounts to studying the causality of given morphisms at the $n$-th rank, with a tacit understanding that original objects at the $p$-th rank for $p$ negative and very large may not necessarily be known. The problem however is that coherence conditions for the associativity of morphisms make this formalism intractable computation-wise.\\

Another approach is provided by considering sequences of categories instead. The problem with that approach however is that a collection of categories $\{\mathcal{C}^{(0)}\}$ is known at the onset, and causality becomes a non-issue. We regard this formalism as a diminished variant of the above theory. Our view is that from a causal perspective, once we go back further for smaller and smaller values of the rank at which we are studying \"uber-categories and morphisms between them, there is a point beyond which we can still refer to the considered objects as \"uber-categories though we may not necessarily currently have the mathematical machinery required to fully describe the objects thus obtained. This is why working with sequences of pointed \"uber-categories is unrealistic in our opinion. However, that picture carries one very important simplification. Precisely because we work with sequences and higher categories and morphisms are defined and constructed inductively, it is possible to limit ourselves to $(\infty,1)$-\"uber-categories. For that purpose, we define an adjunction $f \dashv g$ to be invertible if both $f$ and $g$ are invertible, and we take the inverse of $f \dashv g$ to be $f^{-1} \dashv g^{-1}$. We define $\mathcal{F}\text{Cat}_{\infty}$ to be the $\infty$-category of $\mathbb{Z}$-graded pointed \"uber-categories and higher morphisms between them which we refer to as the category of \textbf{fractal $\infty$-categories}. The category of $(\infty,1)$-sequences of pointed \"uber-categories we call the category of \textbf{fractal $\infty$-categories in positive degrees} which we denote by $\mathcal{F}_{\geq 0}\text{Cat}_{\infty}$.\\

For questions of connectedness, we limit ourselves to sequences of \"uber-categories. Observe that at a given rank, categories $\mathcal{C}^{(n)}$ have a definite knowledge of where they are coming from as $\mathcal{C}^{(k)}$ is known for $k < n$, but from the perspective of $\mathcal{C}^{(n)}$, nothing is known about $\mathcal{C}^{(l)}$ for $l >n$. This makes proving a connectedness statement about $\mathcal{F}_{\geq 0}\text{Cat}_{\infty}$ difficult. We introduce a notion of compactification that may be useful in that regard. Assuming such a notion as $f^{(\infty)}$ to make sense, if for all morphisms we have $f^{(0)}$ to be isomorphic to $f^{(+\infty)}$ then we can define a first compactification of $\mathcal{F}_{\geq 0}\text{Cat}_{\infty}$ to be defined by:

\beq
\mathbb{K}_1\mathcal{F}_{\geq 0}\text{Cat}_{\infty}:=\mathcal{F}_{\geq 0}\text{Cat}_{\infty}/
\text{Ob}^{(0)} \sim \text{Ob}^{(+\infty)}
\eeq
where objects at the $0$-th and $+\infty$ ranks are identified. Another compactification is given by a one point compactification if for each object $\mathcal{C}^{(*)}$ of $\mathcal{F}_{\geq 0}\text{Cat}_{\infty}$ we can find a category $\mathcal{C}^{(-)}$ such that it has $\mathcal{C}^{(+\infty)}$ as an object and for any two elementary morphisms $\phi^{(+\infty)}$ and $\psi^{(+\infty)}$ between two \"uber-categories $\mathcal{C}^{(+\infty)}$ and $\mathcal{D}^{(+\infty)}$ respectively given by  $f_{\phi^{(+\infty)}}:\mathcal{C}^{(+\infty)} \rightleftarrows \mathcal{D}^{(+\infty)}:g_{\phi^{(+\infty)}}$ and $f_{\psi^{(+\infty)}}:\mathcal{C}^{(+\infty)} \rightleftarrows \mathcal{D}^{(+\infty)}:g_{\psi^{(+\infty)}}$, we have a morphism $\mu^{(-)}$ given by $f_{\mu^{(-)}}:\mathcal{C}^{(-)} \rightleftarrows \mathcal{D}^{(-)}:g_{\mu^{(-)}}$ such that $f_{\mu^{(-)}}$ is a morphism between $f_{\phi^{(+\infty)}}$ and $f_{\psi^{(+\infty)}}$ and likewise $g_{\mu^{(-)}}$ is a morphism between $g_{\psi^{(+\infty)}}$ and $g_{\phi^{(+\infty)}}$. Suppose there is another morphism $\rho^{(-)}$ given by $f_{\rho^{(-)}}:\mathcal{C}^{(-)} \rightleftarrows \mathcal{D}^{(-)}: g_{\rho^{(-)}}$. We also need that both $\mathcal{C}^{(-)}$ and $\mathcal{D}^{(-)}$ be objects of $\mathcal{C}^{(0)}$ and $\mathcal{D}^{(0)}$, for which one of the morphisms between them is given by an adjunction $f^{(0)} \dashv g^{(0)}$, $f^{(0)}$ a morphism from $f_{\mu^{(-)}}$ to $f_{\rho^{(-)}}$. In that case we can define the one point compactification of $\mathcal{F}_{\geq 0}\text{Cat}_{\infty}$ as:
\beq
\mathbb{K}_2\mathcal{F}_{\geq 0}\text{Cat}_{\infty}:=\mathcal{F}_{\geq 0}\text{Cat}_{\infty} \coprod_{\mathcal{C} \in \mathcal{F}_{\geq 0}\text{Cat}_{\infty}}\{\mathcal{C}^{(-)}, \mathcal{C}^{(-)} \rightleftarrows \mathcal{D}^{(-)}\}
\eeq
Either compactification still yields a fractal $\infty$-category. Suppose that in either case we obtain a disconnected fractal $\infty$-category. In that case for two disconnected components we can define a twist by an integer number $p$ by letting all categories and morphisms of either component be shifted by $p$: $\mathcal{C}^{(n)} \mapsto \mathcal{C}^{(n+p)}$ and $f^{(n)} \mapsto f^{(n+p)}$. If for all $p \in \mathbb{Z}$ the corresponding components are still disconnected then we say the original category $\mathcal{F}_{\geq 0}\text{Cat}_{\infty}$ is \textbf{totally disconnected}.\\
\begin{FCatConn}
$\mathcal{F}_{\geq 0}\text{Cat}_{\infty}$ is not totally disconnected.
\end{FCatConn}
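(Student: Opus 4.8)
The plan is to negate the defining condition directly. Since $\mathcal{F}_{\geq 0}\text{Cat}_{\infty}$ is totally disconnected only if it is disconnected and \emph{every} pair of its disconnected components stays disconnected after \emph{every} twist, it is enough to exhibit one pair of disconnected components together with one integer $p$ whose twist fuses them; and if $\mathcal{F}_{\geq 0}\text{Cat}_{\infty}$ happens to have a single component there is nothing to prove. I would therefore fix once and for all a distinguished object $\mathbf{W}$ of $\mathcal{F}_{\geq 0}\text{Cat}_{\infty}$ whose term $\mathbf{W}^{(n)}$ at each rank is a ``universal'' \"uber-category, one containing as objects every \"uber-category of the preceding rank together with the trivial category $\emptyset$ that furnishes identity morphisms. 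Such a self-referential object is exactly what the anti-foundation apparatus was installed for: it lives in Boffa's universe $\mathbf{B}$, not in a ZF universe. Write $B_{0}$ for the connected component of $\mathbf{W}$. The statement now follows from the claim that for every object $\mathcal{C}^{(*)}$ of $\mathcal{F}_{\geq 0}\text{Cat}_{\infty}$ there is an integer $p$ for which the $p$-twist of the component of $\mathcal{C}^{(*)}$ is connected to $B_{0}$; granting this, take as the two disconnected components $B_{0}$ and the component of any $\mathcal{C}^{(*)}$ outside $B_{0}$, twist the latter by the relevant $p$, and total disconnectedness fails. (If the full claim proves too optimistic it is already enough to find a \emph{single} component other than $B_{0}$ that some twist carries into $B_{0}$.)

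To establish the claim I would argue rank by rank, using the Proposition above on the absence of sudden appearances as the bookkeeping device; its proof is local to consecutive ranks and transfers verbatim to positive-degree sequences. That Proposition says a morphism between two positive-degree \"uber-categories at rank $n$ forces, by downward induction, a morphism between their rank-$0$ bases, so the sole possible obstruction to a zig-zag between $\mathcal{C}^{(*)}$ and $\mathbf{W}$ sits at the bottom of the two sequences. A twist by $p$ relabels rank $n$ as rank $n+p$ and hence \emph{changes which term plays the role of the base}: after a sufficiently negative twist the base of the twisted $\mathcal{C}^{(*)}$ becomes a deep term $\mathcal{C}^{(m)}$, while $\mathbf{W}$, being universal at every rank, is unchanged by any twist. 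Because the deep terms are built by the inductive containment rule, $\mathcal{C}^{(m)}$ eventually contains a copy of $\emptyset$ and of all the lower \"uber-categories appearing along $\mathbf{W}$, which lets one compare it to $\mathbf{W}^{(0)}$ --- through a fully faithful inclusion and then a reflector, or, failing that, through an explicit zig-zag of \"uber-categories. Constructing these comparisons, and not merely noting that the Proposition permits them, is part of the content; once they are in place at every rank one assembles them into a single higher morphism of sequences using composition together with the invertibility of adjunctions available for $(\infty,1)$-\"uber-categories, and the twisted component of $\mathcal{C}^{(*)}$ is thereby joined to $B_{0}$.

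The main obstacle is precisely the one the text has already conceded: the coherence and associativity constraints that make the full higher-categorical structure of $\mathcal{F}_{\geq 0}\text{Cat}_{\infty}$ ``intractable computation-wise.'' One must check that the rank-wise comparisons glue to a genuine higher morphism of sequences, that the composites in the zig-zag satisfy the associativity constraints of at least weak $\infty$-categories, and that $\mathbf{W}$ together with the deep-term comparison is compatible with the splitting principle for elementary morphisms; at each of these points a model in $\mathbf{B}$ must actually be produced rather than assumed. A secondary but real subtlety is fixing the twist itself: a negative twist is an honest truncation of the sequence and is harmless, but a positive twist must supply terms at the vacated low ranks, and the only choices compatible with the containment rule are again universal self-referential categories, so the positive case leans on $\mathbf{B}$ as well. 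I expect the coherence verification, rather than the combinatorics of the twist or the choice of $\mathbf{W}$, to be where the genuine work lies.
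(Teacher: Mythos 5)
There is no proof in the paper to compare against: the statement is labelled a \emph{Conjecture} (the environment \texttt{FCatConn} is declared as a Conjecture), and the author offers no argument for it. So your proposal must stand on its own, and as it stands it is a research plan rather than a proof, with gaps that go beyond the coherence issues you flag yourself. The most serious is the object $\mathbf{W}$: a term $\mathbf{W}^{(n)}$ containing \emph{every} \"uber-category of rank $n-1$ as an object is a proper-class-sized collection, and Boffa's anti-foundation axiom, which the paper invokes only to permit self-membership, does nothing to license a set of all sets of a given kind --- the Russell-type obstruction is about size, not foundation. You would also need to check that $\mathbf{W}$ is pointed, respects the splitting principle for elementary morphisms, and is genuinely an object of $\mathcal{F}_{\geq 0}\text{Cat}_{\infty}$; none of this is addressed. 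Granting $\mathbf{W}$, the central claim --- that every component twists into $B_0$ --- rests on comparisons (``a fully faithful inclusion and then a reflector, or, failing that, an explicit zig-zag'') that are named but never constructed, as you concede.

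Two further points concern the statement itself. First, the paper defines the twist on components of the \emph{compactifications} $\mathbb{K}_1\mathcal{F}_{\geq 0}\text{Cat}_{\infty}$ or $\mathbb{K}_2\mathcal{F}_{\geq 0}\text{Cat}_{\infty}$, and ``totally disconnected'' is conditional on one of those being disconnected; you work with components of $\mathcal{F}_{\geq 0}\text{Cat}_{\infty}$ directly. Second, the definition is ambiguous between ``for every pair of components, some twist fuses them'' and ``there exist two components that no twist fuses''; you silently adopt the reading under which a single fusable pair suffices to negate total disconnectedness. Under the other (arguably more natural) reading, exhibiting one pair that fuses proves nothing, and you would need your full claim for \emph{all} components --- which is exactly the part you have not established. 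Finally, your own observation that a positive twist of a positive-degree sequence leaves the low ranks undefined means the twisted object may not lie in $\mathcal{F}_{\geq 0}\text{Cat}_{\infty}$ at all, and your proposed repair again routes through the problematic universal object. In short: the strategy is coherent as a sketch, but every load-bearing step is either unconstructed or depends on an object whose existence is doubtful.
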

This category offers a dynamic alternative to Cat$^{\triangleq}$. Though we conjecture this latter to be connected, we still regard $\mathcal{F}_{\geq 0}\text{Cat}_{\infty}$ as being of some relevance for answering questions pertaining to causality in higher category theory, though as we said above it is really $\mathcal{F}\text{Cat}_{\infty}$ that is important. Further we anticipate such a concept as fractal $\infty$-categories to be versatile and to have applications in other branches of Mathematics.

\subsection{Grothendieck topology on $\mathcal{F}\text{Cat}_{\infty}$}

We put a $\mathbb{Z}$-graded Grothendieck topology on $\mathcal{F}\text{Cat}_{\infty}$. In doing so we will mainly follow \cite{GM} as this is a standard reference. A first observation is that since we work with enrichments of categories at each rank, it is natural to have Grothendieck topologies for each rank, hence the notion of a $\mathbb{Z}$-graded Grothendieck topology. Fix $n \in \mathbb{Z}$. We work within the $n$-th slice for the time being.\\

For $\mathcal{C}^{(*)}$ an object of $\mathcal{F}\text{Cat}_{\infty}$, we define the following family of morphisms:
\beq
\Phi^{(n)}(\mathcal{C}^{(n)})=\{\text{all morphisms } \phi_i:\mathcal{C}_i^{(n)} \rightarrow \mathcal{C}^{(n)} \}
\eeq

\begin{GrothTop}
The families of sieves $\{\Phi^{(n)}(\mathcal{C}^{(n)})\}_{\mathcal{C} \in \mathcal{F}\text{Cat}_{\infty}}$ define \\a Grothendieck topology on the $n$-th slice of $\mathcal{F}\text{Cat}_{\infty}$.
\end{GrothTop}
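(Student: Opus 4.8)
The plan is to verify directly the three axioms that define a Grothendieck topology in terms of covering sieves, in the form presented in \cite{GM}: that the maximal sieve on every object is covering, that covering sieves are stable under pullback, and that covering sieves satisfy the local character (transitivity) condition. The first thing I would do is record that, on the $n$-th slice of $\mathcal{F}\text{Cat}_{\infty}$ --- whose objects are the rank-$n$ \"uber-categories $\mathcal{C}^{(n)}$ and whose morphisms are the adjunctions $f \dashv g$ together with the higher morphisms between them, with identities and composition defined as in the earlier discussion of $\text{Cat}^{\triangleq}$ and higher category theory --- the family $\Phi^{(n)}(\mathcal{C}^{(n)})$ consists of \emph{all} morphisms with codomain $\mathcal{C}^{(n)}$. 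Hence it is automatically closed under precomposition and is precisely the maximal sieve on $\mathcal{C}^{(n)}$. One then declares the covering sieves of $\mathcal{C}^{(n)}$ to be exactly the singleton $\{\Phi^{(n)}(\mathcal{C}^{(n)})\}$, so that the proposed topology is the chaotic one; since every Grothendieck topology must contain the maximal sieve, this is the minimal such structure, and the content of the Proposition is that this assignment does satisfy the axioms.

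For the maximal sieve axiom there is nothing to prove: $\Phi^{(n)}(\mathcal{C}^{(n)})$ is by construction declared covering, and it is the maximal sieve. For stability, let $\psi \colon \mathcal{D}^{(n)} \to \mathcal{C}^{(n)}$ be any morphism in the $n$-th slice and form the pullback sieve $\psi^{*}\Phi^{(n)}(\mathcal{C}^{(n)}) = \{\theta \colon \mathcal{E}^{(n)} \to \mathcal{D}^{(n)} \mid \psi \circ \theta \in \Phi^{(n)}(\mathcal{C}^{(n)})\}$. Every composite $\psi \circ \theta$ is a morphism with codomain $\mathcal{C}^{(n)}$, hence automatically lies in $\Phi^{(n)}(\mathcal{C}^{(n)})$; therefore $\psi^{*}\Phi^{(n)}(\mathcal{C}^{(n)})$ contains every morphism with codomain $\mathcal{D}^{(n)}$, i.e. it equals $\Phi^{(n)}(\mathcal{D}^{(n)})$, which is covering. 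For the local character axiom, suppose $S = \Phi^{(n)}(\mathcal{C}^{(n)})$ is covering and $T$ is a sieve on $\mathcal{C}^{(n)}$ such that $\psi^{*}T$ is covering for every $\psi \in S$. Because $\mathrm{id}_{\mathcal{C}^{(n)}} \in \Phi^{(n)}(\mathcal{C}^{(n)}) = S$, applying the hypothesis to $\psi = \mathrm{id}_{\mathcal{C}^{(n)}}$ gives that $\mathrm{id}_{\mathcal{C}^{(n)}}^{*}T = T$ is covering, whence $T = \Phi^{(n)}(\mathcal{C}^{(n)})$; in particular $T$ is a covering sieve. This establishes all three axioms on the $n$-th slice, and letting $n$ range over $\mathbb{Z}$ yields the $\mathbb{Z}$-graded Grothendieck topology on $\mathcal{F}\text{Cat}_{\infty}$.

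The one point that genuinely requires care --- and which I expect to be the real obstacle --- is the preliminary claim that the $n$-th slice of $\mathcal{F}\text{Cat}_{\infty}$ is a bona fide, locally small category to which the sieve-theoretic formalism of \cite{GM} even applies. Since the \"uber-categories live in Boffa's universe $\mathbf{B}$ and a morphism is an adjunction $f \dashv g$ that may be read in either direction, one must check that the collection of morphisms between fixed $\mathcal{C}^{(n)}$ and $\mathcal{D}^{(n)}$ forms a set, that composition --- defined via products of the cone vertices, as for $\text{Cat}^{\triangleq}$ --- is well defined and associative up to the coherence data of the ambient $(\infty,1)$-structure, and that the identities of the previous subsections behave as units. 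Granting this, the verification above shows that the chaotic assignment $\mathcal{C}^{(n)} \mapsto \{\Phi^{(n)}(\mathcal{C}^{(n)})\}$ is a Grothendieck topology on each slice, which is the assertion of the Proposition.
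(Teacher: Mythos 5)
Your proof is correct and follows the same overall strategy as the paper: both identify $\Phi^{(n)}(\mathcal{C}^{(n)})$ as the maximal sieve, so that the proposed topology is the chaotic one, and both verify the three axioms directly, with the maximal-sieve and pullback-stability axioms handled identically (the pullback of the sieve of all morphisms is again the sieve of all morphisms). The one genuine divergence is in the local-character axiom. You dispose of it by the standard trick: since $\mathrm{id}_{\mathcal{C}^{(n)}}$ lies in the covering sieve $S$, the hypothesis applied to $\psi=\mathrm{id}$ gives that $\mathrm{id}^{*}T=T$ is itself covering, hence maximal. The paper instead runs a longer argument that exploits the reversibility of adjunction-morphisms: it reverses each $\phi_i^{(n)}\colon \mathcal{C}_i^{(n)}\to\mathcal{C}^{(n)}$ to a morphism $\psi_i^{(n)\supset}\colon \mathcal{C}^{(n)}\to\mathcal{C}_i^{(n)}$, factors morphisms into $\mathcal{C}_i^{(n)}$ through $\mathcal{C}^{(n)}$, and then appeals to the composite $\phi^{(n)}\circ\phi^{(n)\supset}$ being ``isomorphic to the identity'' to conclude that $\Psi^{(n)}$ contains all morphisms into $\mathcal{C}^{(n)}$. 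Your route is shorter and avoids relying on that last claim (an adjunction composed with its reverse need not be equivalent to the identity, so the paper's step is the more fragile one); the paper's route, on the other hand, makes no use of the identity morphism and so would survive in a setting where identities were treated unusually (as they are in $\text{Cat}^{\triangleq}$, where the identity is the morphism from the empty category). You also flag, correctly, that the real issue is whether the $n$-th slice is a well-defined locally small category in the first place --- a point the paper passes over in silence.
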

\begin{proof}
By definition, the set of all morphisms to $\mathcal{C}^{(n)}$ in the $n$-th slice of $\mathcal{F}\text{Cat}_{\infty}$ is precisely $\Phi^{(n)}(\mathcal{C}^{(n)})$, a covering sieve. Let $\Phi^{(n)}(\mathcal{C}^{(n)})$ be a covering sieve. Let $\mathcal{D}^{(n)} \xrightarrow{\psi^{(n)}} \mathcal{C}^{(n)}$ be a morphism in the $n$-th slice of $\mathcal{F}\text{Cat}_{\infty}$. We define the restriction $\Phi_{\mathcal{D}^{(n)}}^{(n)}(\mathcal{C}^{(n)})$ of this sieve to $\mathcal{D}^{(n)}$ to be the family of morphisms $\mathcal{C}_i^{(n)} \xrightarrow{\phi_i^{(n)}} \mathcal{D}^{(n)} \xrightarrow{\psi^{(n)}} \mathcal{C}^{(n)}$ such that the composition $\psi^{(n)} \circ \phi_i^{(n)}$ is in $\Phi^{(n)}(\mathcal{C}^{(n)})$. By definition however this is true for all $\phi_i^{(n)}$ so $\Phi_{\mathcal{D}^{(n)}}^{(n)}(\mathcal{C}^{(n)})=\Phi^{(n)}(\mathcal{D}^{(n)})$. Thus the restriction of a covering sieve is a covering sieve. Finally suppose $\Phi^{(n)}(\mathcal{C}^{(n)})=\{\phi_i^{(n)}:\mathcal{C}_i^{(n)} \rightarrow \mathcal{C}^{(n)}\}$ is a covering sieve, $\Psi^{(n)}$ is another sieve over $\mathcal{C}^{(n)}$ such that the restriction $\Psi_{\mathcal{C}_i^{(n)}}^{(n)}$ to any element $\phi_i^{(n)}: \mathcal{C}_i^{(n)} \rightarrow \mathcal{C}^{(n)}$ is a covering sieve. We write $\Psi_{\mathcal{C}_i^{(n)}}^{(n)}=\{\mathcal{D}_j^{(n)} \xrightarrow{\psi_{ji}^{(n)}} \mathcal{C}_i^{(n)} \}$. This being a covering sieve means that all morphisms from $\mathcal{D}_j^{(n)}$ to $\mathcal{C}_i^{(n)}$ constitute this family. If morphisms $\phi_i^{(n)}$ from $\mathcal{C}_i^{(n)}$ to $\mathcal{C}^{(n)}$ are given in part by an adjunction $f_i^{(n)} \dashv g_i^{(n)}$ then the same morphism between $\mathcal{C}_i^{(n)}$ and $\mathcal{C}^{(n)}$ can be viewed as a morphism $\mathcal{C}^{(n)} \rightarrow \mathcal{C}_i^{(n)}$ given in part by an adjunction $g_i^{(n)} \dashv f_i^{(n)}$, which we write as $\psi_i^{(n)\supset}$. Then $\Psi_{\mathcal{C}_i^{(n)}}^{(n)}$ being a covering sieve means that it contains in particular all morphisms $\mathcal{D}_j^{(n)} \rightarrow \mathcal{C}^{(n)} \xrightarrow{\psi_i^{(n)\supset}} \mathcal{C}_i^{(n)}$. This means in such a sequence all morphisms $\mathcal{D}_j^{(n)} \rightarrow \mathcal{C}^{(n)}$ are present. By definition of the restriction of a sieve all such sequences composed with $\phi_i^{(n)}:\mathcal{C}_i^{(n)} \rightarrow \mathcal{C}^{(n)}$ are in $\Psi^{(n)}$. Such compositions are of the form $\mathcal{D}_j^{(n)} \rightarrow \mathcal{C}^{(n)} \xrightarrow{\phi^{(n) \supset}} \mathcal{C}_i^{(n)} \xrightarrow{\phi^{(n)}} \mathcal{C}_i^{(n)}$ where we have a double composition of adjunctions, isomorphic to the identity morphism, so all morphisms $\mathcal{D}_j^{(n)} \rightarrow \mathcal{C}^{(n)}$ are in $\Psi$, hence this latter is a covering sieve.
\end{proof}
\begin{ZGrothn}
We denote by $\Phi^{(n)}$ the Grothendieck topology on the $n$-th slice of $\mathcal{F}\text{Cat}_{\infty}$.
\end{ZGrothn}
\begin{ZGroth}
A $\mathbb{Z}$-graded Grothendieck topology on an $\infty$-category of $\mathbb{Z}$-graded categories $\mathcal{C}^{(*)}$ is a collection $\{G^{(n)}\}_{n \in \mathbb{Z}}$ where each $G^{(n)}$ is a Grothendieck topology on the $n$-th slice of such a category.
\end{ZGroth}
\begin{PropGroth}
$\Phi:=\{\Phi^{(n)}\}_{n \in \mathbb{Z}}$ defines a $\mathbb{Z}$-graded Grothendieck topology on $\mathcal{F}\text{Cat}_{\infty}$ making $(\mathcal{F}\text{Cat}_{\infty}, \Phi)$ into a site.
\end{PropGroth}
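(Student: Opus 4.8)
The plan is to assemble the statement directly from the two immediately preceding results together with the definition of a $\mathbb{Z}$-graded Grothendieck topology. First I would recall that the preceding Proposition establishes, for each fixed $n \in \mathbb{Z}$, that the family of sieves $\{\Phi^{(n)}(\mathcal{C}^{(n)})\}_{\mathcal{C} \in \mathcal{F}\text{Cat}_{\infty}}$ is a genuine Grothendieck topology on the $n$-th slice of $\mathcal{F}\text{Cat}_{\infty}$; this is exactly the topology that the subsequent Definition christens $\Phi^{(n)}$. Thus each entry of the collection $\Phi = \{\Phi^{(n)}\}_{n \in \mathbb{Z}}$ is of precisely the type demanded by the definition of a $\mathbb{Z}$-graded Grothendieck topology.

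Next I would invoke that definition verbatim: a $\mathbb{Z}$-graded Grothendieck topology on an $\infty$-category of $\mathbb{Z}$-graded categories is nothing but a $\mathbb{Z}$-indexed collection of Grothendieck topologies, one on each slice. Since $\mathcal{F}\text{Cat}_{\infty}$ is an $\infty$-category of $\mathbb{Z}$-graded \"uber-categories and $\Phi = \{\Phi^{(n)}\}_{n\in\mathbb{Z}}$ is exactly such a collection by the previous step, $\Phi$ is a $\mathbb{Z}$-graded Grothendieck topology on $\mathcal{F}\text{Cat}_{\infty}$. Finally, following the conventions of \cite{GM}, where a site is a category equipped with a Grothendieck topology, the natural notion of site in the graded setting is an $\infty$-category of $\mathbb{Z}$-graded categories equipped with a $\mathbb{Z}$-graded Grothendieck topology; hence the pair $(\mathcal{F}\text{Cat}_{\infty}, \Phi)$ is a site in this sense.

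The one point worth a remark rather than a calculation is that the definition of a $\mathbb{Z}$-graded Grothendieck topology imposes no cross-rank compatibility between $\Phi^{(n)}$ and $\Phi^{(n\pm 1)}$ along the pointing maps $\mathcal{C}^{(n)} \ni \mathcal{C}^{(n-1)}$: a $\mathbb{Z}$-graded topology is simply a slice-wise product of topologies. I would note explicitly that this is by design — the slices are where the enrichments live, and the grading merely records the rank at which one works — so that no further verification is needed. Should one wish the shift automorphisms $\mathcal{C}^{(*)} \mapsto \mathcal{C}^{(*+p)}$ to be continuous for $\Phi$, this is automatic, since such a shift only relabels slices and carries $\Phi^{(n)}$ to $\Phi^{(n+p)}$; but this is not part of what is claimed. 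Consequently the proposition is a formal consequence of the preceding Proposition and Definition, and the only genuine ``obstacle'' is terminological, namely pinning down the meaning of ``site'' in the $\mathbb{Z}$-graded context, which I would settle as above.
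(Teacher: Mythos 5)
Your proposal is correct and matches the paper's treatment: the paper states this proposition without a separate proof precisely because, as you observe, it is a formal consequence of the preceding Proposition (each $\Phi^{(n)}$ is a Grothendieck topology on the $n$-th slice) combined with the Definition of a $\mathbb{Z}$-graded Grothendieck topology as a slice-wise collection of such topologies. Your added remark about the absence of cross-rank compatibility conditions is a fair clarification of what the definition does and does not require, and does not change the argument.
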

Once that is achieved, one can study $\infty$-topos and stacks over this site. Whether it be Cat$^{\triangleq}$ or $\mathcal{F}\text{Cat}_{\infty}$, one needs to resort to "global" arguments, and the study of topos we expect to be illuminating in that respect.

\newpage

\appendix
\section*{\centering Appendix}

We follow the notations used by Wallbridge as much as possible. Everything below can be found in his paper \cite{W} and we have only selected that which is necessary for our purposes. The material of interest to us we have repackaged for ease of use. Another reference we also use is \cite{L1}.\\

\section{$\infty$-categories}
The higher generalization of classical category theory is fertile ground for proving myriad problems in several seemingly disconnected mathematical fields. The origins of such work can be found in \cite{BV} and \cite{J2} with very nice reviews provided in \cite{L1} and \cite{B}.\\ 

A $(n,m)$-category, $m \leq n$, is a category in which all $k$-morphisms are invertible if $m<k \leq n$. If one sets $n=\infty$ and $m=1$ one deals with $(\infty,1)$-categories, which one typically refers to as $\infty$-categories.
\begin{InftyCat} (\cite{L1}, \cite{BV})
An $\infty$-category is a simplicial set $K$ with the following extension property: for any $0<i<n$ any map $\Lambda_i^n \rightarrow K$ from the $i$-th horn of $\Delta^n$ admits an extension $\Delta^n \rightarrow K$.
\end{InftyCat}
An object of an $\infty$-category that is both initial and final is called a \textbf{zero object} and is denoted by $0$. An $\infty$-category is said to be \textbf{pointed} if it contains a zero object. A pointed $\infty$-category with finite limits and colimits for which pullback and pushout squares are the same is called a \textbf{stable} $\infty$-category.\\ 

The theory of $\infty$-categories is equivalent to the theory of \textbf{topological categories} (\cite{L1}). Those are categories enriched over the category $\mathcal{CG}$ of compactly generated, weakly Hausdorff topological spaces. The equivalence exists by virtue of Theorem 1.1.5.13 of \cite{L1}. The theory of $\infty$-categories is also equivalent to the theory of \textbf{simplicial categories} (\cite{L1}). These are categories enriched over the category $\mathcal{S}\text{et}_{\Delta}$ of simplicial sets. \\

One important concept is that of the homotopy category h$\mathcal{C}$ of an $\infty$-category $\mathcal{C}$. There are two equivalent definitions of h$\mathcal{C}$. One can first define a notion of homotopic maps as follows:
two edges $\phi: X \rightarrow Y$ and $\phi':X \rightarrow Y$ in an $\infty$-category $\mathcal{C}$ are said to be \textbf{homotopic} if there is a 2-simplex $\sigma: \Delta^2 \rightarrow \mathcal{C}$:
\beq
\setlength{\unitlength}{0.5cm}
\begin{picture}(7,5)(0,0)
\put(0,0.3){$X$}
\put(6.2,0.3){$Y$}
\put(3.5,3.5){$Y$}
\put(1,0.5){\vector(1,0){5}}
\put(1,1){\vector(1,1){2}}
\put(6,1){\vector(-1,1){2}}
\put(3.5,-0.5){$\phi'$}
\put(1.5,2.5){$\phi$}
\put(5.5,2.5){$id_Y$}
\end{picture} \nonumber
\eeq
\newline
The relation of homotopy is an equivalence relation on the edges between same vertices of $\mathcal{C}$ (\cite{L1}). One can define the homotopy category h$\mathcal{C}$ of an $\infty$-category $\mathcal{C}$ to be the category whose vertices are those of $\mathcal{C}$ and whose edges are given by the sets of homotopy classes of morphisms between objects of $\mathcal{C}$. Or going back to the definition of an $(\infty,1)$-category, one can define the homotopy category of a simplicial set, for which we need the definition of the homotopy category of a simplicial category, which in turn necessitates to define the homotopy category of a topological category. We define these in turns.\\

One defines the homotopy category h$\mathcal{C}$ of a topological category $\mathcal{C}$ by taking as objects those of $\mathcal{C}$, and for two objects $X$, $Y$ of $\mathcal{C}$, one defines:
\beq
\text{Hom}_{\text{h}\mathcal{C}}(X,Y)=\pi_0 \text{Map}_{\mathcal{C}}(X,Y) \nonumber
\eeq
For instance if one considers the topological category $\mathcal{C}$ whose objects are CW complexes and for objects $X$, $Y$ of such a category $\text{Map}_{\mathcal{C}}(X,Y)$ is the set of continuous maps from $X$ to $Y$, then one refers to $\mathcal{H}=\text{h}\mathcal{C}$ as the \text{homotopy category of spaces}.\\

Observe as is done in \cite{L1} that for $X \in \mathcal{CG}$, the map that assigns to $X$ its homotopy class $[X]$ gives a well-defined map $\theta: \mathcal{CG} \rightarrow \mathcal{H}$. From there one can improve on the definition of the homotopy category of a topological category by defining $\text{Hom}_{\text{h}\mathcal{C}}(X,Y)=[ \text{Map}_{\mathcal{C}}(X,Y) ]$ instead of using path components.\\

We can define the homotopy category of a simplicial category as done in \cite{L1} by applying a certain functor to each of the morphism spaces of this category. Recall that one has a geometric realization functor:
\beq
\mathcal{S}\text{et}_{\Delta} \xrightarrow{||} \mathcal{CG} \nonumber
\eeq
If $\mathcal{C}$ is a simplicial category, one defines its homotopy category h$\mathcal{C}$ as being the $\mathcal{H}$-enriched category following \cite{L1} by applying the functor $\theta \circ ||: \mathcal{S}\text{et}_{\Delta} \rightarrow \mathcal{H}$ to each of its morphism spaces. \\

Finally, in order to define the homotopy category of a simplicial set, one first has to construct some simplicial category $\mathfrak{C}[S]$ from a given simplicial set $S$. This is not very illuminating, and the interested reader is refered to \cite{L1} for details. Then one just defines the homotopy category h$S$ of a simplicial set $S$ to be the homotopy category h$\mathfrak{C}[S]$.\\

Now if $\mathcal{C}$ and $\mathcal{D}$ are $\infty$-categories, one simply defines a functor from $\mathcal{C}$ to $\mathcal{D}$ to be a map $\mathcal{C} \rightarrow \mathcal{D}$ of simplicial sets.\\

A functor $F:\mathcal{C} \rightarrow \mathcal{D}$ between two $\infty$-categories $\mathcal{C}$ and $\mathcal{D}$ is said to be \textbf{fully faithful} if its underlying maps of simplicial sets is so. If $F$ is regarded as a map between simplicial sets, it is said to be fully faithful if the induced map of homotopy categories $\text{h}F: \text{h}\mathcal{C} \rightarrow \text{h}\mathcal{D}$ is such that for all $X$, $Y$ in $\mathcal{C}$, the induced map $\text{Map}_{\text{h}\mathcal{C}}(X,Y) \rightarrow \text{Map}_{\text{h}\mathcal{D}}(FX, FY)$ is an isomorphism in $\mathcal{H}$.\\

One way to also produce $\infty$-categories is to use \textbf{model categories}, which we now define:
\begin{modelCat}(\cite{L1}, \cite{H})
A model category is a category $\mathcal{C}$ equipped with three distinguished classes of morphisms: fibrations, cofibrations and weak equivalences. In addition, such a category satisfies the following axioms:
\begin{itemize}
\item[-] $\mathcal{C}$ admits small limits and colimits.
\item[-] For any composition $X \xrightarrow{f} Y \xrightarrow{g} Z$, if any two of $g \circ f$, $f$ and $g$ are weak equivalences, so is the third.
\item[-]For a commutative diagram
\beq
\begin{CD}
X @>>i> X' @>>r> X\\
@VfVV @VVgV @VVfV \\
Y @ >>i'> Y' @>>r'> Y
\end{CD} \nonumber
\eeq
for which $r \circ i=id_X$ and $r'\circ i'=id_Y$, then if $g$ is any of a fibration, cofibration, or a weak equivalence, then so is $f$.
\item[-]Given a diagram of solid arrows
\beq
\setlength{\unitlength}{0.5cm}
\begin{picture}(6,5)(0,0)
\put(0,0){$Y$}
\put(4.5,0){$Y'$}
\put(0,4){$X$}
\put(4.5,4){$X'$}
\put(1.2,0.2){\vector(1,0){3}}
\put(0.2,3.8){\vector(0,-1){3}}
\put(-0.5,2){$i$}
\put(1.2,4.3){\vector(1,0){3}}
\put(4.7,3.8){\vector(0,-1){3}}
\put(5,2){$p$}
\multiput(1,1)(0.2,0.2){15}{\circle*{0.1}}
\put(3.5,3.5){\vector(1,1){0.5}}
\end{picture}  \nonumber \\ \\
\eeq
\\
one can find a diagonal arrow from $Y$ to $X'$ as above making the above diagram commutative if either $i$ is a cofibration and $p$ is both a fibration and a weak equivalence, or $i$ is both a cofibration and a weak equivalence and $p$ is a fibration.
\item[-]Any map $X \rightarrow Y$ admits factorizations $X \xrightarrow{i} Z \xrightarrow{p} Y$ ($i$ cofibration, $p$ fibration and weak equivalence) and $X \xrightarrow{i'} Z' \xrightarrow{p'} Y$ ($i'$ cofibration and weak equivalence, $p'$ fibration).
\end{itemize}
\end{modelCat}

For instance the category $\mathcal{S}\text{et}_{\Delta}$ can be endowed with the \textbf{Kan model structure} (in which case we denote $\mathcal{S}\text{et}_{\Delta}$ by $\Sigma$) for which cofibrations are monomorphisms, fibrations $f:X \rightarrow Y$ are \textbf{Kan fibrations}, that is for any diagram of solid arrows:
\beq
\setlength{\unitlength}{0.5cm}
\begin{picture}(6,5)(0,0)
\put(0,0){$\Delta^n$}
\put(4.5,0){$Y$}
\put(0,4){$\Lambda_i^n$}
\put(4.5,4){$X$}
\put(1.2,0.2){\vector(1,0){3}}
\put(0.2,3.8){\vector(0,-1){3}}
\put(1.2,4.3){\vector(1,0){3}}
\put(4.7,3.8){\vector(0,-1){3}}
\multiput(1,1)(0.2,0.2){15}{\circle*{0.1}}
\put(3.5,3.5){\vector(1,1){0.5}}
\put(5,2.5){$f$}
\end{picture}  \nonumber \\ \\
\eeq
one can find a dotted arrow as shown above that makes the diagram commutative for all $n$ and $i$, and finally a map $f:X \rightarrow Y$ is a weak equivalence if the induced map of geometric realizations $|X| \rightarrow |Y|$ is a homotopy equivalence of topological spaces.\\

The process whereby $\infty$-categories are produced from model categories is called $\textbf{localization}$. In the following definition categories are regarded as $\infty$-categories in an obvious way.
\begin{localization}(\cite{W})
Let $\mathcal{C}$ be an $\infty$-category, $S$ a set of morphisms of $\mathcal{C}$. A localization of $\mathcal{C}$ along $S$ is an $\infty$-category $L_S \mathcal{C}$ together with a functor of $\infty$-categories $l: \mathcal{C} \rightarrow L_S \mathcal{C}$ characterized by the following universal property: for any $\infty$-category $\mathcal{D}$, the map of internal Hom objects in the homotopy category of $\infty$-categories
\beq
\mathbb{R}\underline{\text{Hom}}(L_S\mathcal{C}, \mathcal{D}) \rightarrow \mathbb{R}\underline{\text{Hom}}(\mathcal{C}, \mathcal{D}) \nonumber
\eeq
induced by $l$ is fully faithful and its essential image is given by functors $F: \mathcal{C} \rightarrow \mathcal{D}$ that send morphisms in $S$ to an equivalence in $\mathcal{D}$.
\end{localization}
For $\mathcal{M}$ a model category, one lets $L \mathcal{M}$ be the localization of $\mathcal{M}$ along the set of weak equivalences of $\mathcal{M}$.\\

One important example of localization is provided as we will see below by the localization of combinatorial simplicial model categories. Those are model categories that are both combinatorial model categories and simplicial model categories, two concepts we introduce presently. To define simplicial model categories, we need the notions of \textbf{monoidal model category} and of \textbf{left Quillen bifunctor}.
\begin{Quillen} (\cite{L1})
If \textbf{A}, \textbf{B} and \textbf{C} are model categories, a functor $F: \textbf{A}\times \textbf{B} \rightarrow \textbf{C}$ is called a left Quillen bifunctor if $F$ preserves small colimits separately in each variable, and if for $i:A \rightarrow A'$ and $j:B \rightarrow B'$ cofibrations in \textbf{A} and \textbf{B} respectively, the induced map:
\beq
i \wedge j: F(A',B) \coprod_{F(A,B)}F(A, B') \rightarrow F(A', B') \nonumber
\eeq
is a cofibration in \textbf{C}, and further if either of $i$ or $j$ is a trivial cofibration (both a cofibration and a weak equivalence), then so is $i \wedge j$.\\
\end{Quillen}

\begin{MonModCat}(\cite{L1})
A monoidal model category is a monoidal category \textbf{M} equipped with a model structure for which the tensor product functor $\otimes: \textbf{M} \times \textbf{M} \rightarrow \textbf{M}$ is a left Quillen bifunctor, the monoidal structure on \textbf{M} is closed, and the unit object $1 \in \textbf{M}$ is cofibrant. Recall that any model category has an initial object $\emptyset$ and an object $X$ of such a category has a unique map $\emptyset \rightarrow X$ which makes $X$ cofibrant if this map is a cofibration.
\end{MonModCat}

One can enrich model categories over monoidal model categories. If \textbf{M} is a monoidal model category, a \textbf{M}-enriched model category is a model category \textbf{C} equipped with a model structure for which \textbf{C} is tensored and cotensored over itself, and the tensor product $\otimes: \textbf{C} \times \textbf{M} \rightarrow \textbf{C}$ is a left Quillen bifunctor.\\


\begin{SimpModCat} (\cite{L1})
If one endows the category $\mathcal{S}\text{et}_{\Delta}$ of simplicial sets with the cartesian product and the Kan model structure, it becomes a monoidal model category. A $\mathcal{S}\text{et}_{\Delta}$-enriched category is called a \textbf{simplicial model category}.\\
\end{SimpModCat}

\begin{WeaklySat}(\cite{L1})
Let $C$ be a category with all small colimits, $S$ a class of morphisms in $C$. One says that $S$ is \textbf{weakly saturated} if the following conditions are met: given a diagram
\beq
\begin{CD}
x @>>f> y\\
@VVV @VVV\\
x'@>>f'> y'
\end{CD} \nonumber
\eeq
such that $f$ belongs to $S$, then so does $f'$. If $x$ is an object of $C$, $\alpha$ is an ordinal, $\{z_{\beta}\}_{\beta<\alpha}$ is a system of objects of the undercategory $C_{x/}$, if for $\beta \leq \alpha$, $x_{\rightarrow \beta}$ denotes a colimit in $C_{x/}$ of the system $\{x_{\gamma}\}_{\gamma < \beta}$, if for each $\beta < \alpha$ the natural maps $x_{\rightarrow \beta} \rightarrow x_{\beta}$ belong to $S$, then so does the induced map $x \rightarrow x_{\rightarrow \alpha}$. Finally for a commutative diagram
\beq
\begin{CD}
x @>>> x' @>>> x\\
@VVfV @VVgV @VVfV\\
y @>>> y' @>>> y
\end{CD}  \nonumber
\eeq
for which both horizontal compositions are the identity, if $g$ is in $S$, then so is $f$.
\end{WeaklySat}

\begin{CombModCat} (\cite{L1})
A model category \textbf{C} is said to be a \textbf{combinatorial model category} if it is presentable, there exists a set $I$ of generating cofibrations such that the collection of all cofibrations in \textbf{C} is the smallest weakly saturated class of morphisms containing $I$, and if there exists a set $J$ of generating trivial cofibrations such that the collection of all trivial cofibrations in \textbf{C} is the smallest weakly saturated class of morphisms containing $J$.
\end{CombModCat}

One defines a \textbf{combinatorial simplicial model category} to be a model category that is both a combinatorial model category and a simplicial model category.

\begin{presentable} (\cite{W})
An $\infty$-category is said to be \textbf{presentable} if it is equivalent to the localization of a combinatorial simplicial model category.
\end{presentable}

\begin{Rmkpres}
One could also have defined a presentable $\infty$-category as done in \cite{L1}: an $\infty$-category is presentable if it is accessible and admits small colimits. Then for $\mathcal{C}$ an $\infty$-category, $\mathcal{C}$ is presentable if and only if it is equivalent to $N(\mathcal{A}^0)$, $\mathcal{A}^0$ the subcategory of fibrant-cofibrant objects in a combinatorial simplicial model category $\mathcal{A}$ (\cite{L1}).
\end{Rmkpres}

One denotes by $\text{Cat}_{\infty}^p$ the full subcategory of $\text{Cat}_{\infty}$ spanned by presentable $\infty$-categories.\\

An example of $\infty$-category that will be fundamental to us is provided by \textbf{symmetric monoidal $\infty$-categories}. We will follow \cite{W} closely to define these. We denote by $\Xi$ the category of pointed finite ordinals and point preserving maps. We denote a pointed ordinal $n \coprod \{\ast\}$ by $n\ast$. One has a monoidal category structure on $\Xi$ given by $(\Xi, \vee, 0\ast)$.
We denote by P($\Sigma$) the category of $\Sigma$-enriched precategories with the injective or Reedy model structure as considered in \cite{W} (see \cite{L1} for details on these model structures). Let $p:C \rightarrow \Xi$ be an object of P($\Sigma)_{/\Xi}$. An arrow $F$ in $C(a,b)$ is said to be \textbf{$p$-cocartesian} if for all $c \in C$, the induced morphism
\beq
C(b,c)\rightarrow C(a,c) \times_{\Xi(p(a),p(c))} \Xi(p(b),p(c)) \nonumber
\eeq
is a weak equivalence in $\Sigma$.\\

An object $p:C \rightarrow \Xi$ in P($\Sigma)/\Xi$ is said to be a \textbf{cofibered $\infty$-category} if for any morphism $f:n\ast \rightarrow m\ast$ in $\Xi$, for every object $c \in C$ such that $p(c)=n$ there exists a cocartesian morphism $F$ such that $p(F)$ is isomorphic to $f$ in the category $\Xi_{n\ast /}$.
For $p:C \rightarrow \Xi$ an object of P($\Sigma)_{/\Xi}$, we denote by $C_{n\ast}$ the fiber of $p$ at $n\ast \in \Xi$. For all $n \geq 1$ and $0<i \leq n$ one considers the $n$ pointed maps $p_i:n\ast \rightarrow 1\ast$ in $\Xi$ defined by $p_i(j)=\{j\}$ if $j=i$, $\ast$ otherwise. Those maps induce natural maps $(p_i)_{\uparrow}:C_{n\ast} \rightarrow C_{1\ast}$.
\begin{SymmMonCat}(\cite{W})
Let $C$ be an $\infty$-precategory. A symmetric monoidal $\infty$-category is a cofibered object $p:C \rightarrow \Xi$ of P($\Sigma)_{/\Xi}$ for which
\beq
C_{n\ast} \xrightarrow{\amalg_i (p_i)_{\uparrow}} (C_{1\ast})^n   \nonumber
\eeq
is an equivalence for all $n \geq 0$.
\end{SymmMonCat}

An important example of symmetric monoidal $\infty$-category is given in \cite{W} and is the following. The category P($\Sigma)$ of $\Sigma$-precategories with the injective or Reedy model structure is a model category for $\infty$-categories, and turns out to be a symmetric monoidal simplicial model category for the cartesian product, from which it follows that its localization with respect to weak equivalences, which is none other than the $\infty$-category Cat$_{\infty}$ of $\infty$-categories, is itself a symmetric monoidal $\infty$-category.\\

An arrow $f:n\ast \rightarrow m\ast$ in $\Xi$ is said to be \textbf{inert} if $f^{-1}\{i\}$ is a single element, $i \in m\ast -\ast$. An arrow $F$ in a symmetric monoidal $\infty$-category $C$ is said to be \textbf{p-inert} if $F$ is a cocartesian arrow in $C$ and $p(F)$ is inert in $\Xi$. For two symmetric monoidal categories $p:C \rightarrow \Xi$ and $q:D \rightarrow \Xi$, a functor $F:C \rightarrow D$ for which the diagram
\beq
\setlength{\unitlength}{0.5cm}
\begin{picture}(7,5)(0,0)
\put(1.5,1.5){$p$}
\put(5.5,1.5){$q$}
\put(3.5,4){$F$}
\put(0,3.3){$C$}
\put(6.2,3.3){$D$}
\put(3.2,0){$\Xi$}
\put(1,3){\vector(1,-1){2}}
\put(6,3){\vector(-1,-1){2}}
\put(1,3.5){\vector(1,0){4.8}}
\end{picture}  \nonumber \\ \\
\eeq
is commutative and for which $F$ carries $p$-cocartesian arrows to $q$-cocartesian arrows is said to be symmetric monoidal. It is said to be \textbf{lax symmetric monoidal} if it carries $p$-inert arrows to $q$-cocartesian arrows. One denotes by Cat$^{\otimes}_{\infty}$ the $\infty$-category of symmetric monoidal $\infty$-categories and symmetric monoidal functors.\\

For the purpose of studying Tannaka duality, we will restrict our attention to $\infty$-categories that are rigid since for those a notion of duality can be defined. One says that an object of a symmetric monoidal $\infty$-category $C$ is dualizable if it admits a dual in the homotopy category h$C$. A symmetric monoidal $\infty$-category for which all objects are dualizable is said to be \textbf{rigid}.

\section{$E_{\infty}$-rings}
Let $\mathcal{C}$ be an $\infty$-category with finite limits. One regards $\mathbb{Z}$ as a filtered category. Let $T$ be an endofunctor on $\mathcal{C}$. Consider:
\begin{align}
\phi:\mathbb{R}\underline{\text{Hom}}(\mathbb{Z},\mathcal{C}) &\rightarrow \mathbb{R}\underline{\text{Hom}}(\mathbb{Z},\mathcal{C}) \nonumber \\
F &\longmapsto \phi(F):n \mapsto \phi(F)(n)=T(F(n+1))  \nonumber
\end{align}
\begin{Tspectrum} (\cite{W})
Let $\mathcal{C}$ be an $\infty$-category with finite limits, $T$ an endofunctor on $\mathcal{C}$. A functor $F:\mathbb{Z} \rightarrow \mathcal{C}$ for which $F \rightarrow \phi(F)$ is an equivalence in $\mathbb{R}\underline{\text{Hom}}(\mathbb{Z},\mathcal{C})$ is called a $\mathbf{T}$-\textbf{spectrum} object of $\mathcal{C}$.
\end{Tspectrum}
Let $\mathcal{C}$ be a pointed $\infty$-category with finite limits. The \textbf{loop functor} $\Omega$ of $\mathcal{C}$ is the endofunctor of $\mathcal{C}$ defined by:
\beq
\Omega:x \mapsto 0 \times_x 0  \nonumber
\eeq
\begin{spectrum} (\cite{W})
If $\mathcal{S}_*$ denotes the $\infty$-category of pointed spaces, a \textbf{spectrum} is an object of $\text{Sp}:=\text{Sp}_{\Omega}(\mathcal{S}_*)$, the $\infty$-category of $\Omega$-spectrum objects of $\mathcal{S}_*$.
\end{spectrum}

For $\mathcal{C}$ a presentable $\infty$-category, the natural functor $\text{Ev}_n: \text{Sp}_T(\mathcal{C}) \rightarrow \mathcal{C}$ has a left adjoint $\text{Fr}_n : \mathcal{C} \rightarrow \text{Sp}_T (\mathcal{C})$. If $\ast$ denotes the final object of $\mathcal{S}$, $T=\Omega$ and $\mathcal{C}=\mathcal{S}$, then $\text{Fr}_0(\ast)=:\mathbb{S}$ is called the sphere spectrum. Sp can be endowed with a symmetric monoidal structure uniquely characterized by the conditions that $\mathbb{S}$ be the unit object of Sp and $\otimes: \text{Sp} \times \text{Sp} \rightarrow \text{Sp}$ preserves colimits separately in each variable (\cite{L2}).\\

One last ingredient needed before defining an $E_{\infty}$ ring is the concept of \textbf{commutative monoid object}.
\begin{ComMonObj}(\cite{W})
Let $p:C \rightarrow \Xi$ be a symmetric monoidal $\infty$-category. A commutative monoid object in $C$ is a lax symmetric monoidal section of $p$. One denotes by CMon($C$) the $\infty$-category of commutative monoid objects in $C$.
\end{ComMonObj}

\begin{CRingSpec}(\cite{W})
A commutative ring spectrum, or $E_{\infty}$-ring, is a commutative monoid object in the $\infty$-category of spectra endowed with the smash product monoidal structure.
\end{CRingSpec}
Now that we have seen what an $E_{\infty}$-ring is, we go back to the general theory of commutative monoid objects. In \cite{W} and \cite{L2}, for $R$ a commutative monoid object in a symmetric monoidal $\infty$-category $C$, a symmetric monoidal $\infty$-category $p:\text{Mod}_{R}(C) \rightarrow \Xi$ is defined, the $\infty$-category of $R$-modules of $C$. The reader is referred to the references for details pertaining to the construction of such a category. For our purposes we will retain that for such a symmetric monoidal $\infty$-category, the unit object is canonically equivalent to $R$ and one has a relative tensor product over $R$ denoted by $\otimes_R$, leading to a functor (\cite{W}):
\begin{align}
\text{Mod}(C): \text{CMon}(C) & \rightarrow \text{Cat}^{\otimes}_{\infty}  \nonumber \\
R & \mapsto \text{Mod}_R(C) \nonumber \\
f:R &\rightarrow S \mapsto -\otimes_R S  \nonumber
\end{align}

A symmetric monoidal $\infty$-category $C$ is said to be a \textbf{presentable symmetric monoidal $\infty$-category} if its symmetric product preserves small colimits separately in each variable and the fibers $C_{n\ast}$ are presentable $\infty$-categories for all $n > 0$.
\begin{Rlinear}(\cite{W})
Let $D$ be a presentable symmetric monoidal $\infty$-category. A presentable $\infty$-category is said to be \text{$R$-linear} if one can give it the structure of a Mod$_R(D)$-module object in Cat$^p_{\infty}$.
\end{Rlinear}

\section{$R$-Tannakian group stacks}
We will be interested in group stacks. In this regard we will first go over stacks as done in \cite{W} and then study Tannakian Hopf algebras. If $(C, \tau)$ is a site, a topology $\tau$ on the $\infty$-category $C$ is equivalent to having covering families for all objects of $C$, with the added axioms that those families  contain equivalences, and are stable under compositions and base change. Let $\{u_i \rightarrow x\}_{i \in I}$ be the covering family of some object $x \in C$, write $u=\prod_i u_i$ and define the simplicial object:
\beq
u_{\ast}:[n] \mapsto  u \times _x \cdots \times_x u   \nonumber
\eeq
\begin{stack} (\cite{W})
If $(C, \tau)$ is a site, $X$ an $\infty$-category with limits, an $X$-valued prestack $F:C^{op} \rightarrow X$ is an $X$-valued stack if for all $x \in C$ and all coverings $u_{\ast}$ the map $F(x) \rightarrow \lim_{\Delta}F(u_{\ast})$ is an equivalence in $X$.
\end{stack}

An $\mathcal{S}$-valued stack will simply be referred to as a \textbf{stack}, and the $\infty$-category of such stacks will be denoted by St$^{\tau}(C)$. Those topologies $\tau$ on $C$ for which every representable functor on $C$ is a stack with respect to $\tau$ are called \textbf{subcanonical} and will play an important role later. If one writes Aff$_C$ for CMon$(C)^{op}$ and one fixes a symmetric monoidal $\infty$-category $D$, then Aff$_R:=\text{Aff}_{\text{Mod}_R(D)}$ effectively corresponds to the opposite of the $\infty$-category of commutative $R$-algebras in $D$, where $R$-algebras in $D$ are simply defined to be commutative monoid objects in Mod$_R(D)$. This will be the category on which we will construct stacks. To be more precise, we would like to work with \textbf{group stacks}. That notion necessitates the introduction of the notion of group objects in $\mathcal{S}$. For an $\infty$-category $C$ with pullbacks, a group object $G$ is a functor $G: \Delta^{op} \rightarrow C$ such that for all $n \geq 0$ the canonical map $G([n]) \rightarrow G([1])\times _{G([0])} \cdots \times _{G([0])} G([1])$ is an equivalence in $C$, if it takes every partition $[2]=\{P \cup P' | P \cap P'=\{x\}, x \in P\}$ to a diagram
\beq
\begin{CD}
G([2]) @>>> G(P')\\
@VVV @VVV\\
G(P) @>>> G(\{x\})
\end{CD}  \nonumber
\eeq
and finally if $G([0])$ is a terminal object in $C$. Let Gp($\mathcal{S})$ be the $\infty$-category of group objects of $\mathcal{S}$. A Gp($\mathcal{S})$-valued stack will be called a group stack by virtue of the equivalence between Gp(Pr($C$)) and $\mathbb{R}\underline{\text{Hom}}(C^{\text{op}}, \text{Gp}(\mathcal{S}))$ (\cite{W}), and the $\infty$-category of group stacks on an $\infty$-category $C$ will be denoted by Gp$^{\tau}(C)$. One denotes by Gp$^{\tau}(R):=\text{Gp}^{\tau}(\text{Aff}_R)$ the $\infty$-category of group stacks on the site $(\text{Aff}_R, \tau)$ of commutative $R$-algebras.\\

Among the topologies one can put on Aff$_R$, in this paper we are interested in the \textbf{finite} topology (\cite{W}). Thus we have to define what are finite coverings. Let $R$ be an E$_{\infty}$-ring and $A \rightarrow B$ a map of $R$-algebras. Consider the base change functor
\begin{align}
B \otimes_A -:\text{Mod}_A & \rightarrow \text{Mod}_B   \nonumber\\
M & \mapsto B \otimes_A M  \nonumber
\end{align}
Such a map is said to be conservative if $B \otimes_A M \simeq 0$ if and only if $M \simeq 0$.\\

For $R$ an E$_{\infty}$-ring, $A$ an $R$-algebra, an $A$-module $M$ is said to be finite if the functor $- \otimes _A M: \text{Mod}_A \rightarrow \text{Mod}_A$ preserve all small limits.
A map $A \rightarrow B$ of $R$-algebras is said to be finite if $B$ is finite when considered as an $A$-module.
For $R$ an E$_{\infty}$-ring, a finite family of maps $\{A \rightarrow B_i\}_{i \in I}$ of $R$-algebras is said to be a finite covering if $A \rightarrow B_i$ is finite and conservative for all $i \in I$.
\begin{finite} (\cite{W})
Let $R$ be an E$_{\infty}$-ring. Finite coverings define a topology on Aff$_R$ referred to as the \textbf{finite topology}.
\end{finite}
\begin{Rmrk1}
It is implied in this notation that Aff$_R=\text{Aff}_{\text{Mod}_R(D)}$ with $D=$Sp.\\
\end{Rmrk1}

For $R$ an E$_{\infty}$ ring, the prestack
\begin{align}
\text{Mod}:\text{Aff}^{op}_R & \rightarrow \text{Cat}_{\infty}\label{stack}\\
A & \mapsto \text{Mod}_A   \nonumber\\
(A \rightarrow B) & \mapsto B \otimes_A -  \nonumber
\end{align}
is a stack of $\infty$-categories over Aff$_R$ with respect to the finite topology.

\begin{HopfRAlg}(\cite{W})
For $C$ a symmetric monoidal $\infty$-category, $R$ a commutative monoid object in $C$, a \textbf{Hopf $R$-algebra} in $C$ is a cogroup object in the symmetric monoidal $\infty$-category of commutative $R$-algebras in $C$.
\end{HopfRAlg}
\begin{finHopf}(\cite{W})
Let $C$ be a symmetric monoidal $\infty$-category. Let $(\text{Aff}_R, \tau)$ be a site. A Hopf $R$-algebra $B$ in $C$ is called a \textbf{$\tau$-Hopf $R$-algebra} if $B([1]) \in \tau(R)$
\end{finHopf}

Recall that we defined Aff$_C:=\text{CMon}(C)^{op}$ and if one uses the notation $C^{\wedge}$ for the $\infty$-category of prestacks $\mathbb{R}\underline{\text{Hom}}(C^{op}, \mathcal{S})$, then one denotes the Yoneda embedding Aff$_C \rightarrow (\text{Aff}_C)^{\wedge}$ by Spec. For $(C, \tau)$ a site, Pr($C):=\mathbb{R}\underline{\text{Hom}}(C^{op}, \mathcal{S})$, one can define a classifying prestack functor
\begin{align}
\overline{B}:\text{Gp}(\text{Pr}(C)) &\rightarrow \text{Pr}(C)  \nonumber\\
G &\mapsto \overline{B}G: x \mapsto \text{B}(G(x))   \nonumber
\end{align}
the stackification of which is called the classifying stack functor (\cite{W}):
\beq
\tilde{B}:\text{Gp}^{\tau}(C) \rightarrow \text{St}^{\tau}(C)  \nonumber
\eeq
For $C$ and $D$ symmetric monoidal $\infty$-categories, one writes End$^{\otimes}(F)=\text{Map}(F,F)$ in $\mathbb{R}\underline{\text{Hom}}^{\otimes}_{\Xi}(C,D)$.

\section{Connecting the dots}

\begin{RTensor}(\cite{W})
Let $R$ be an E$_{\infty}$-ring. A symmetric monoidal $\infty$-category that is $R$-linear, stable and presentable is referred to as an \textbf{$R$-tensor $\infty$-category}. One denotes by Tens$^{\otimes}_R$ the $\infty$-category of all such categories.
\end{RTensor}

The stack Mod considered in \eqref{stack} can be used to define a map
\begin{align}
\text{Mod}:\text{St}^{\tau}(R)^{op} & \rightarrow \text{Cat}_{\infty}  \nonumber\\
F & \mapsto \text{Mor}(F, \text{Mod})  \nonumber
\end{align}
where as done in \cite{W} we use the same notation Mod for this map as the stack Mod for simplicity of notation. The $\infty$-category Mor($F, \text{Mod})$ is equipped with a structure of $R$-tensor $\infty$-category and thus one has a well-defined functor $\text{Mod}: \text{St}^{\tau}(R)^{op} \rightarrow \text{Tens}^{\otimes}_R$. One denotes by Tens$^{\text{rig}}_R$ the $\infty$-category of \textbf{rigid $R$-tensor $\infty$-categories}. Following \cite{W}, if we restrict our attention to rigid objects, one has a stack Perf: Calg$_R \rightarrow \text{Tens}^{\text{rig}}_R$ that sends a commutative $R$-algebra $A$ to the $\infty$-category Mod$^{\text{rig}}_A$ of rigid $A$-modules, and the restriction of the map Mod leads to a functor:
\begin{align}
\text{Perf}: \text{St}^{\tau}(R)^{op} & \rightarrow \text{Tens}^{\text{rig}}_R  \nonumber\\
F &\mapsto \text{Mor}(F, \text{Perf}) \nonumber
\end{align}
This functor admits a left adjoint
\begin{align}
\text{Fib}: \text{Tens}^{\text{rig}}_R & \rightarrow \text{St}^{\tau}(R)^{op}  \nonumber\\
C &\mapsto \underline{\text{Hom}}(C, \text{Perf})  \nonumber
\end{align}
with Fib$(C)(A)=\text{Map}_{\text{Tens}^{\text{rig}}_R}(C, \text{Mod}^{\text{rig}}_A)$ for a commutative $R$-algebra $A$.\\

One defines $\hat{\omega}_G: \text{Mod}(\tilde{\text{B}}G) \rightarrow \text{Mod}_R$ and $\omega_G: \text{Perf}(\tilde{\text{B}}G) \rightarrow \text{Mod}^{\text{rig}}_R$.\\

Let $R$ be an $E_{\infty}$-ring. An affine group stack $G=\text{Spec}B$ in Gp$^{\tau}(R)$ is said to be weakly rigid if $\text{End}^{\otimes}(\hat{\omega}_G) \rightarrow \text{End}^{\otimes}(\omega_G)$ is an equivalence.\\

\begin{RTan}(\cite{W})
Let $R$ be an $E_{\infty}$-ring, $\tau$ a subcanonical topology. A group stack $G$ in Gp$^{\tau}(R)$ is said to be $R$-Tannakian if $G=\text{Spec}B$ for a Hopf $R$-algebra $B$, and is weakly rigid. It is further said to be finite-$R$-Tannakian if it is $R$-Tannakian for a finite-Hopf $R$-algebra.
\end{RTan}
Let TGp$^{\tau}(R)$ be the full subcategory of Gp$^{\tau}(R)$ spanned by the $R$-Tannakian group stacks.
As an aside, it is not the fact that we use a Hopf algebra that makes the group stack Tannakian, it is the weakly rigid condition. Rigidity is associated with a notion of duality which of course is what the Tannakian formalism is all about.\\

We now come to the part where both the $\infty$-category and the algebraic part get connected. For Tannaka duality to take place, we need to consider fiber functors. We are thus led to define (\cite{W}) \textbf{pointed rigid $R$-tensor $\infty$-categories}, pairs $(C, \omega)$ where $C$ is a rigid $R$-tensor $\infty$-category and $\omega:C \rightarrow \text{Mod}^{\text{rig}}_R$ is a $R$-tensor functor referred to as a fiber functor. One denotes by $(\text{Tens}^{\text{rig}}_R)_{\ast}$ the category spanned by such pairs. We also need the following map:
\begin{align}
\text{Fib}_{\ast}: (\text{Tens}^{\text{rig}}_R)_{\ast} & \rightarrow \text{Gp}^{\tau}(R)^{op} \nonumber\\
(C, \omega) &\mapsto \text{End}^{\otimes}(\omega)  \nonumber
\end{align}
That such a map is well-defined follows from a result of \cite{W} which states that End$^{\otimes}(\omega)$ is a representable Gp$(\mathcal{S})$-valued prestack, and thus an affine group stack with respect to $\textit{any}$ subcanonical topology $\tau$. Dually one has the functor
\begin{align}
\text{Perf}_{\ast}: \text{Gp}^{\tau}(R)^{op} & \rightarrow (\text{Tens}^{\text{rig}}_R)_{\ast}  \nonumber\\
G & \mapsto (\text{Perf}(\tilde{\text{B}}G), f^*)  \nonumber
\end{align}
where $f: \ast \rightarrow \tilde{\text{B}}G$ induces the functor $f^*: \text{Perf}(\tilde{\text{B}}G) \rightarrow \text{Mod}^{\text{rig}}_R$. Those functors give rise to an adjunction (\cite{W}):
\beq
\text{Fib}_{\ast}:(\text{Tens}^{\text{rig}}_R)_{\ast} \rightleftarrows \text{Gp}^{\tau}(R)^{op}:\text{Perf}_{\ast}  \nonumber
\eeq

Finally the pointed rigid $R$-tensor $\infty$-categories we work with have a Tannakian-like behavior for a choice of three topologies, one of which is of interest to us, the finite topology.
\begin{pointed}
Let $R$ be an $E_{\infty}$-ring. A \textbf{pointed $R$-Tannakian $\infty$-category} for the finite topology is a pair $(T, \omega)$ for $T$ a rigid $R$-tensor $\infty$-category and $\omega:T \rightarrow \text{Mod}^{\text{rig}}_R$ a finite fiber functor, meaning that the induced functor $\text{Mod}(\tilde{\text{B}}\text{End}^{\otimes}(\omega)) \rightarrow \text{Mod}_R$ is conservative and preserves small limits.
\end{pointed}
Once that is achieved, \cite{W} asserts that $(T, \omega)=\text{Perf}_{\ast}(G)$, thus displaying Tannakian group stacks as the generators of Tannakian $\infty$-categories.\\

\end{document}